\documentclass{amsart}

\usepackage{amssymb}
\usepackage[stretch=10,shrink=10]{microtype}
\usepackage[shortlabels]{enumitem}
\usepackage{tikz}
\usetikzlibrary{arrows.meta}
\usepackage{comment}
\usepackage{hyperref}
\hypersetup{colorlinks,linkcolor={red},citecolor={olive},urlcolor={red}}
\usepackage{mathtools}
\mathtoolsset{showonlyrefs}

\newcommand{\f}{\frac}
\newcommand{\ind}[1]{\mathbf{1}{\left\{ #1 \right\}}}
\renewcommand{\phi}{\varphi}
\newcommand{\E}{\mathbf E}
\newcommand{\X}{X}
\newcommand{\x}{\xi}
\renewcommand{\a}{\mathbf{a}}
\newcommand{\A}{\mathbf{A}}
\newcommand{\B}{\mathcal{B}}
\renewcommand{\P}{\mathbf P}
\newcommand{\TT}{\mathbb T}

\DeclareMathOperator{\Poi}{Poi}
\DeclareMathOperator{\Geo}{Geo}

\DeclareMathOperator{\Bin}{Bin}
\DeclareMathOperator{\TFM}{TFM}

\DeclareMathOperator{\TSSFM}{SSTFM}

\usepackage{theoremref}

\newtheorem{theorem}{Theorem}
\newtheorem{lemma}[theorem]{Lemma}

\newtheorem{proposition}[theorem]{Proposition}

\theoremstyle{definition}

\theoremstyle{remark}

    \title[A stochastic combustion model with  thresholds on trees]{A stochastic combustion model with thresholds on trees}

%\author[Johnson]{Tobias Johnson}

\author[Junge]{Matthew Junge}
\email{Matthew.Junge@baruch.cuny.edu}

\author[McDonald]{Zoe McDonald}

\author[Pulla]{Jean Pulla}

\author[Reeves]{Lily Reeves}

\thanks{The authors received partial support from NSF grant DMS-1855516. 
%Johnson received support from NSF grant DMS-1811952 and PSC-CUNY Award \#62628-00 50. 
Part of this research was completed during the 2021 Baruch College Discrete Math REU partially supported by NSF grant DMS-2051026. We are grateful to Tobias Johnson for his valuable input}

\begin{document}

\begin{abstract}
Place one active particle at the root of a graph and a Poisson-distributed number of dormant particles at the other vertices. Active particles perform simple random walk. Once the number of visits to a site reaches a random threshold, any dormant particles there become active. For this process on infinite $d$-ary trees, we show that the  total number of root visits undergoes a phase transition. 
%We additionally prove that this model on integer lattices has no phase transition. 
%Our findings generalize previous results for the usual frog model, which can be viewed as having threshold one at each site. 
\end{abstract}

\maketitle

\section{Introduction}

We study an interacting particle system in which active particles diffuse across a graph interspersed with dormant particles which become active once a visit threshold is met. There is initially one active particle at the root and $\X_v$ dormant particles at each nonroot site $v$. The $\X_v$ are independent with common probability measure $\x$ supported on the nonnegative integers. Each site is also independently assigned a threshold $T_v$ with common probability measure $\tau$ supported on the positive integers with $\tau(\infty)<1$. We will write $\X$ and $T$ for generic random variables corresponding to $\x$ and $\tau$, respectively. Active particles perform simple random walk in discrete time steps while dormant particles remain in place.  Once the total number of visits to $v$ by active particles reaches $T_v$, any dormant particles at $v$ are converted to active particles.

%\HOX{Perhaps we should add a clarification about notation to the begininning of this paragraph. Suggestion: "Given a graph $G$ with distinguished root vertex, a threshold measure $\tau$ and initial distribution $\x$, we denoted the above model by TFM($G,\tau,\X$)." -ZM}
%Natural choices for $\x$ include $\x = \delta_1$ i.e., one particle per site, and $\x \sim \Poi(\mu)$ so that there is a Poisson with mean $\mu$ number of dormant particles per site. Choices for $\tau$ to keep in mind are $\tau = \delta_k$ so that the threshold is a deterministic integer $k \geq 1$, and $\tau \sim \Geo(p)$ so that the threshold is a Geometric random variable supported on $1,2\hdots$ with success parameter $p$. Note we will sometimes write $\Geo_0$ to denote a Geometric distribution supported on $0,1,\hdots$.

These dynamics may be written as a reaction equation by viewing $A$-particles as active and $B_n$-particles as dormant at a site that has received $n$ total visits from $A$-particles. The reaction rules at $v \in G$ are then
\begin{align}
A+B_n \to \begin{cases} A+A,& n = T-1 \\ A+B_{n+1}, & \text{otherwise} \end{cases}.\label{eq:rule}
\end{align}
%\HOX{Should we write $T_v$ instead of $T$? -ZM}
Note that if multiple $A$-particles arrive simultaneously, each visit contributes towards meeting the threshold.

The dynamics at \eqref{eq:rule} with $\tau =\delta_1$ have been interpreted as a model for combustion as well as rumor/infection spread \cite{ramirez2004asymptotic, alves2002shape}. It is natural to introduce thresholds, since in these applications multiple interactions may be required to spark a reaction in the such systems. Other recent work used similar dynamics with thresholds to model viral phages cooperating to overcome immunity in host bacteria \cite{brouard2022invasion, landsberger2018anti}. 
From a mathematical perspective, it is interesting to measure the effect different thresholds have on the propagation of $A$-particles.

Because of the chaotic manner in which $B$-particles are converted to $A$-particles, the process with reactions at \eqref{eq:rule} is often colloquially referred to as the \emph{frog model}; $A$-particles are \emph{active frogs} and $B$-particles are \emph{sleeping frogs}. The vivid imagery, though removed from applications, is useful when describing the process. For this reason, we will refer to particles as frogs and call the process the \emph{threshold frog model on $G$ with threshold $\tau$ and initial configuration $\x$} which we abbreviate with $\TFM(G,\tau,\x)$. If $\tau$ or $\x$ are point masses $\delta_k$ or have a named distribution we will often replace them with the simpler representation. For example, $\TFM(G,\delta_1,\Poi(\mu))$  has $\tau=\delta_1$ and $\x \sim \Poi(\mu)$ where $\Poi(\mu)$ denotes a Poisson distribution with mean $\mu$.

%The model has received a lot of attention since the first paper studying it by Telcs and Wormald in \cite{telcs1999branching}.  
There is a significant body of work devoted to studying frog models on $\mathbb Z^d$. 
The first published result about the frog model came from Telcs and Wormald \cite{telcs1999branching} and concerned the number of visits to the root for $\TFM(\mathbb Z^d,\delta_1,\delta_1)$. Later, Alves, Machado, and Popov in \cite{alves2002shape} proved that the set of visited sites scaled linearly by time converges to a deterministic limiting shape. A similar result in continuous time was proven by Ram\'irez and Sidoravicius  in \cite{ramirez2004asymptotic}. 
Subsequent work has studied variations at the front \cite{comets2009fluctuations, berard2010large, kesten2012asymptotic}. Additionally, some variants have attracted recent attention including: the frog model with death \cite{alves2002phase, lebensztayn2019new}; competitive dynamics with two species of $A$-particles \cite{deijfen2019competing, roy2021coexistence};  a version in which $B$-particles are linked via clusters in critical bond percolation \cite{junge2020critical}; versions in which the random walk paths have drift \cite{gantert2009recurrence, dobler2018recurrence, beckman2019frog}; and an adaptation to Euclidean space \cite{beckman2018asymptotic}.
To our knowledge no previous work has considered the threshold variant described at \eqref{eq:rule}. 

%We initiated the study of it during the Summer 2021 Baruch College Discrete Math REU. It took us some time to realize that there is a natural way to compare a threshold frog model to a frog model with threshold one but a smaller initial density of $B$-particles. 
%Although this connection does not require a long proof, it helps explain the threshold frog model. We go onto describe some consequences and a few lingering questions.

%Unlike the $p=1$ case, the set of sites at which the threshold has been met is not connected. 

\subsection{Result}

Our focus is the threshold frog model  on the infinite complete $d$-ary tree $\mathbb T_d$ i.e., the rooted tree in which each vertex has $d$ child vertices. Denote the root by $\varnothing$. We assume that $\x \sim \Poi(\mu)$. 
To lighten notation we will write
$$\TFM_d(\tau, \mu) := \TFM(\mathbb T_d, \tau, \Poi(\mu)).$$

A fundamental statistic is the total number of visits by active frogs to the root up to time $t$, which we label $V_t$. It is convenient to not count the presence of the initially awake frog at the root as a visit. So, formally, if $A_s$ is the number of active frogs at $\varnothing$ at time $s$, then $V_t = \sum_{s=1}^t A_s$. Let $V = \lim_{t\to \infty} V_t$. We call a realization of the process \emph{recurrent} if $V= \infty$, and otherwise call it \emph{transient}. Explicitly, we are interested in the quantity 
$$\mu_c(\tau,d) := \inf \{ \mu \colon \TFM_d(\tau, \mu) \text{ is recurrent almost surely}\}.$$
%Note that $\mu_c(\tau,d)$ could be equivalently defined using a supremum since frog models with larger $\mu$ are more likely to be recurrent. Also, 
% \HOX{this is referring to just the last paragraph of the proof \cite[Proposition 1.4]{beckman2019frog}, right? ---TLJ} \HOX{Delete ``of''? -Lily} 
A simple adaptation of the last paragraph of the proof of \cite[Proposition 1.4]{beckman2019frog} gives that recurrence of $\TFM$ satisfies a $0$-$1$ law. Thus, it is equivalent to define $\mu_c(\tau,d)$ for recurrence occurring with positive probability.

Although trees are often simpler settings to study statistical physics models, this is not the case for the frog model. %Compare to the process on the integer lattice where random walk diffuses more slowly and so less accuracy is needed. 
The process on the integer lattice was proven by Popov to be recurrent for all $\mu>0$ \cite{popov2001frogs}. It appears to us that one could easily extend this result to prove that $\TFM(\mathbb Z^d, \tau, \x)$ is recurrent for all $d, \tau,$ and $\x$. 
On the other hand, it was unknown for over a decade whether or not there was a recurrent phase for the frog model with $\tau \sim \delta_1$ on a $d$-ary tree \cite{popov2003frogs, gantert2009recurrence}.  
Hoffman, Johnson, and Junge resolved that question by proving that the frog model with one particle per site and threshold one is recurrent on the binary tree but transient on the $d$-ary tree with $d \geq 5$\cite{hoffman2017recurrence}. The same authors showed that $0<\mu_c(\delta_1,d) < \infty$ \cite{hoffman2016transience}. So the frog model with threshold one and $\xi \sim \Poi(\mu)$ undergoes a phase transition from transience to recurrence as $\mu$ is increased. 

% the process on a $d$-ary tree undergoes a phase transition; there is a critical density $\mu_c(\delta_1,d)$ such that for $\mu< \mu_c(\delta_1,d)$ $\TFM_d(\delta_1,\Poi(\mu))$ is transient almost surely while for $\mu > \mu_c(\delta_1,d)$ the process is recurrent almost surely. 

Transience and recurrence behavior for the frog model on trees is subtle. 
Random walk paths inherit a drift from the tree structure. Individual particles visit each level of the tree a small number of times. 
Widespread activation is required to overcome the drift and persistently send active particles to the root. 
Thresholds introduce a new difficulty since active frogs will often silently pass through sites with sleeping frogs. It is not immediately clear whether or not recurrence can occur with thresholds.
Indeed, we were unable to find any argument that directly used the known recurrence of the frog model on trees without thresholds. Our main result establishes that an analogous phase transition occurs for any threshold. 

\begin{theorem}\thlabel{thm:main} For all $d \geq 2$ it holds that $0<\mu_c(\tau,d) < \infty.$
\end{theorem}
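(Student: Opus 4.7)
I split the statement into the transience bound $\mu_c(\tau,d)>0$ and the recurrence bound $\mu_c(\tau,d)<\infty$.

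For the lower bound, my plan is to run $\TFM_d(\tau,\mu)$ and $\TFM_d(\delta_1,\mu)$ on the same probability space, with identical Poisson sleeping configurations $\{\X_v\}$ and identical random walk trajectories attached to every sleeping frog (each executed from its own activation time). A short induction on $t$ shows that every site activated by time $t$ in the $\tau$-process is also activated by time $t$ in the $\delta_1$-process: the $T_v$ visits that trigger $v$ in the $\tau$-process come from frogs whose home sites, by the induction hypothesis, were awakened no later in the $\delta_1$-process, so the same shared walk paths reach $v$ no later there and threshold one requires only one such visit. Hence $V^{\tau} \le V^{\delta_1}$ almost surely, and since $\mu_c(\delta_1,d)>0$ by \cite{hoffman2016transience}, the lower bound transfers.

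For the upper bound, the authors already note that the threshold-one recurrence result does not obviously couple the right way, so a direct argument is needed. My plan is to adapt the self-similar recursion from \cite{hoffman2016transience}. Fix $K$ with $p:=\P(T\le K)>0$, which exists since $\tau(\infty)<1$. It suffices to show that the modified process obtained by deleting sleeping frogs wherever $T_v>K$ and raising the threshold to $K$ wherever $T_v\le K$ is recurrent for $\mu$ large, since that process is pathwise dominated by $\TFM_d(\tau,\mu)$. The modified process has i.i.d.\ site states, each site being \emph{good} with probability $p$, carrying $\Poi(\mu)$ sleeping frogs and threshold $K$. Introducing a truncated self-similar version in which each frog is stopped upon exiting its activation subtree, I would track the number $R$ of frogs crossing the root-edge of a subtree and derive a stochastic recursion relating $R$ to i.i.d.\ copies of itself indexed by child subtrees. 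For $\mu$ large, an activated good vertex releases a $\Poi(\mu)$ cloud; by Poisson thinning, the number of those frogs reaching any fixed descendant good site dominates $\Poi(c\mu)$ for a positive constant $c$ depending only on $d$, so the event of accumulating $K$ visits has probability tending to $1$ as $\mu\to\infty$. Combined with the branching of good subtrees, this will make the recursion supercritical; together with the $0$-$1$ law quoted earlier this yields $V=\infty$ almost surely.

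The main obstacle is the recurrence direction. The threshold destroys the clean first-visit activation structure that drives the argument of \cite{hoffman2016transience}: one must control the joint distribution of $K$ visits arriving at a site from many independently walking frogs with random start-times, while maintaining enough independence for a self-similar comparison. Simultaneously accounting for the site-thinning factor $p$ and the $K$-visit requirement is what prevents any verbatim reuse of existing arguments, and verifying quantitatively that the combined release size $\Poi(\mu)$, reach probability, and thinning factor produce a supercritical branching quantity for $\mu$ large is where the technical heart of the proof will lie.
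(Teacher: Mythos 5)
Your lower-bound argument is sound and in fact makes explicit the monotonicity that the paper invokes without proof: with shared sleeping configurations and walk increments executed from activation time, raising the threshold from $1$ to $T_v\ge 1$ can only delay every activation, so $V^{\tau}\le V^{\delta_1}$ pathwise and $\mu_c(\tau,d)\ge\mu_c(\delta_1,d)>0$.

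For the upper bound there is a genuine gap, and it is precisely the one the paper's main idea is designed to close. A self-similar recursion of the type you invoke relies on the \emph{only-one} rule: at most one frog ever enters each child subtree, so the contribution of each subtree is an i.i.d.\ copy of the root quantity. But under that rule a subtree is initiated by a \emph{single} non-backtracking frog, and a non-backtracking (non-lazy) frog visits every vertex at most once on its way down; with threshold $K\ge 2$ it can therefore never wake anything, and the recursion is trivially subcritical. Your proposed fix — letting many frogs from the $\Poi(\mu)$ cloud reach the same descendant site — is exactly what the only-one rule forbids, and if you drop that rule, the subtrees are no longer entered by a controlled, independent input, so the recursion $R\overset{d}{=}\dots$ no longer closes; you would instead need a family of distributions $R_k$ indexed by the (random, correlated) number of entrants per subtree, which is a substantially different and unresolved program. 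Your own closing paragraph flags this as "where the technical heart of the proof will lie," but no mechanism is offered. The paper resolves it by having the single entering frog perform a \emph{lazy} non-backtracking walk, with each lazy step at a vertex counting toward that vertex's threshold; the threshold is declared met only if the first visitor accrues $T_v-1$ lazy steps there. This keeps the one-frog-per-subtree structure intact, yields $\alpha=\E[(d+1)^{-T+1}]>0$ directly from $\tau(\infty)<1$ (no $K$-truncation or good/bad site marking is needed), produces the geometric spine length $N\sim\Geo(\alpha)$ and the recursive distributional equation of \thref{lem:rec}, and is then closed via the Poisson bootstrapping of \thref{prop:dom}. Without the lazy-step device or an equivalent replacement, the recursion you outline does not become supercritical no matter how large $\mu$ is.
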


A corollary of \thref{thm:main} is that this transition holds for $\TFM(\mathbb T_d, \tau, \xi)$ with more general $\xi$. Establishing transience for small initial densities only uses a branching process comparison. So $\TFM(\mathbb T_d, \tau, \xi)$ can always be made transient by taking the mean of $\xi$ sufficiently small. As for recurrence, the comparison result from \cite{johnson2018stochastic} implies that $\TFM(\mathbb T_d, \tau, \xi)$ is recurrent so long as $\xi$ dominates a Poisson distribution with mean $m > \mu_c(\tau,d)$ in the increasing concave (icv) order. Roughly speaking, the icv order rewards distributions for being concentrated. For example, a frog model with a fixed number of particles $m > \mu_c(\tau,d)$ is more concentrated, and thus recurrent on $\mathbb T_d$. See \cite{johnson2018stochastic} for more details.

%Notice that the upper bound is tremendously large. For example, if $\tau \sim \delta_2$ so that the threshold is two at each site, we must take $\mu > 1.9*10^{32}$ to ensure recurrence. Compare to \cite{johnson2016critical} which proves that when $\tau \sim \delta_1$ the process is recurrent with $\mu >1.45$. While this may seem absurd, there is no obvious way to directly apply the result with threshold 1 to that with higher thresholds.  As discussed in Section~\ref{sec:overview}, It took significant innovation over past approaches

%Note that adding $A$ particles to the process results in a stochastically larger number of visits by $A$-particles to the root up to time $t$. Thus, for fixed $d$ we have $\mu_c(p,d)$ increases as $p \to 0$. 
%In \cite{johnson2016critical}, Johnson and Junge showed that $\mu_c(\delta_1,2) \leq 1.45$ and that $.24 d \leq \mu_c(\delta_1,d) \leq 2.28d$ for all large enough $d$. We use this to provide asymptotic bounds on $\mu_c(\tau,d)$. 

% Our second result moves away from $\mathbb T_2$ to the integer lattice $\mathbb Z^d$. While this is a natural, well-studied setting, it is less interesting from the perspective of transience and recurrence since there is no phase transition. 

% \begin{theorem} \thlabel{thm:Zd}
%  $\TFM(\mathbb Z^d, \tau, \X)$ is recurrent almost surely for any $\tau$ and $\x$. 
% \end{theorem}

\subsection{Proof overview} \label{sec:overview} 

 Our main contribution is the upper bound on $\mu_c(\tau, d)$. It is obtained by considering a modified version of $\TFM_d(\tau, \mu)$ that trims from the ranges of the random walk paths to produce an embedded sub-process with less root visits inspired by the \emph{self-similar frog model} introduced in \cite{hoffman2017recurrence}. We restrict active particles to follow a lazy non-backtracking portion of their full random walk path. To accommodate thresholds, we only allow the threshold $T_v$ at a site $v$ to be met if the first active particle that visits $v$ accrues at least $T_v-1$ lazy steps (plus its initial step) there.

In \thref{lem:rec}, we prove that the number of visits to the root in our restricted model $V'$ satisfies a recursive distributional equation (RDE). The RDE
involves $(d-1) N$ independent thinned copies of $V'$ with $N$ a geometric random variable.
Our approach to deriving and analyzing the RDE follows the same general strategy as introduced in \cite{hoffman2016transience}. Namely, we use it to prove that if $V' \succeq \Poi(\lambda)$, then in fact $V' \succeq \Poi(\lambda+1)$. See \thref{prop:dom}. Bootstrapping implies that $V' = \infty$ almost surely. As $V\succeq V'$, this gives \thref{thm:main}.

 The recursive equation satisfied by $V'$ is less homogeneous than past analogues. In particular, the number of i.i.d.\ copies of $V'$ is random and these variables are thinned by different (random) amounts. The most novel aspect of the proof of \thref{thm:main} is the derivation of an exact formula for the probability a given subset of variables are ``activated" in the Poisson version of our RDE. See \thref{lem:A}. The formula is a basic consequence of Poisson thinning (see \thref{lem:thinning}). %\HOX{Cite Lemma 12 here. Also we cite Lemma 12 in the proof of Lemmas 8 \& 9 but nowhere else. Maybe only cite the first time? -Lily} 
 However, past works \cite{hoffman2016transience, johnson2016critical} missed this characterization, and instead relied on simpler approximations. 
%We truly need the formula in \thref{lem:A} to carry out our argument. 
%It is not possible to use the approximations from \cite{hoffman2016transience, johnson2016critical} to prove an analogue of \thref{prop:dom}.  
The approaches from \cite{hoffman2016transience, johnson2016critical} are too crude to prove \thref{prop:dom}. 
\thref{lem:A} allows us to balance the Poisson quantity of root visits with proportional probability bounds that penalize few visits. This balancing act can be seen in the formula \eqref{eq:hexpand} and tractable bound at \eqref{eq:hc}. These, and \thref{lem:A}, are the most novel technical aspects of our work.

\subsection{Discussion and a further question}
\thref{thm:main} shows that, like the usual frog model, the threshold frog model on trees  has a transience to recurrence phase transition. This confirms that the models are qualitatively similar. However, we expect that the quantitative behavior of the threshold is rather different. It would be interesting to describe the asymptotic growth of $\mu_c(\tau,d)$ as $d \to \infty$. It is proven in \cite{johnson2016critical} that $\mu_c(\delta_1, d) = O(d)$.  Our approach can be refined to give a super-exponential bound $\log_d \mu_c(\tau,d) = O( d^{C}/\E[(d+1)^{-\tau} ])$. We believe that the true growth rate is much smaller. A concrete open question, for which we have no conjecture, is determining whether the growth of $\mu_c(\delta_2,d)$ is polynomial, exponential, or super-exponential in $d$.

\subsection{Organization}
In Section \ref{sec:ss}, we construct a self-similar threshold frog model and derive some important properties. Section \ref{sec:operator} introduces a more general operator. We relate this operator to the self-similar threshold frog model and then prove a bootstrapping result in \thref{prop:dom}. Section~\ref{sec:proof} uses these results to prove \thref{thm:main}. The appendix contains some useful properties of the Poisson distribution.

 \section{The self-similar threshold frog model} \label{sec:ss}

Here we define the modified frog model described after \thref{thm:main} and deduce some of its properties. Note that we write $\Geo(p)$ for a geometric random variable with parameter $p$ supported on $1,2,\hdots$ and $\Geo_0(p)$ for the analogue supported on $0,1,\hdots$. We will sometimes abuse notation and write $Y \sim \Geo(p)$ to mean that the random variable $Y$ has the indicated distribution.

\subsection{Properties of the Poisson distribution}

We begin by stating two properties of the Poisson distribution that are essential to our later arguments.

\begin{lemma} \thlabel{lem:thinning}
Suppose that $Z$ is a Poisson random variable with mean $\lambda$. Let $Z_1, Z_2, \hdots $ be an independent and identically distributed sequence of random variables with $\P(Z_i = j) = p_j$ for $1\leq j \leq k$ and set $N_j = |\{ m \leq N \colon X_m = j\}|$. Then the $N_1,\hdots, N_k$ are independent and each $N_j$ is a Poisson random variable with mean $p_j \lambda_j$. 
\end{lemma}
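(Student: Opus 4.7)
The plan is to verify the claim directly by computing the joint probability mass function of $(N_1,\ldots,N_k)$ and exhibiting it as a product of Poisson pmfs; this is the classical Poisson thinning (splitting) result, so the proof is a bookkeeping exercise rather than a new argument. I read the statement as saying that each $Z_m$ takes values in $\{1,\ldots,k\}$ with $\sum_{j=1}^k p_j = 1$, and that $N_j = |\{m \le Z : Z_m = j\}|$; the asserted mean of $N_j$ is $p_j\lambda$.

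First I would fix nonnegative integers $n_1,\ldots,n_k$ and set $n = n_1+\cdots+n_k$. Since $N_1+\cdots+N_k = Z$, the event $\{N_1=n_1,\ldots,N_k=n_k\}$ forces $Z=n$. Conditionally on $Z=n$, the variables $Z_1,\ldots,Z_n$ are $n$ i.i.d.\ categorical trials, so $(N_1,\ldots,N_k)$ is multinomial with parameters $(n;p_1,\ldots,p_k)$:
\begin{equation*}
\P(N_1=n_1,\ldots,N_k=n_k \mid Z=n) \;=\; \frac{n!}{n_1!\cdots n_k!}\,p_1^{n_1}\cdots p_k^{n_k}.
\end{equation*}

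Next I would multiply by $\P(Z=n) = e^{-\lambda}\lambda^n/n!$ and use $\lambda^n = \prod_j \lambda^{n_j}$ together with $e^{-\lambda} = \prod_j e^{-p_j\lambda}$ (valid because $\sum_j p_j = 1$) to get
\begin{equation*}
\P(N_1=n_1,\ldots,N_k=n_k) \;=\; \prod_{j=1}^k \frac{e^{-p_j\lambda}(p_j\lambda)^{n_j}}{n_j!}.
\end{equation*}
Because the joint pmf factors as a product of $\Poi(p_j\lambda)$ pmfs, independence of the $N_j$ and their marginal Poisson laws follow simultaneously.

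There is no real obstacle here; the only subtlety is a degenerate case. If one does not assume $\sum_{j=1}^k p_j = 1$, the same computation goes through after introducing an auxiliary class $j=0$ with $p_0 = 1-\sum_{j\ge1}p_j$ and discarding $N_0$ at the end, since the marginal of any subset of coordinates of a vector with independent components is just the product of the corresponding marginals.
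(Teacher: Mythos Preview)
Your proof is correct; it is the standard derivation of Poisson splitting via the multinomial conditional law, and your reading of the (slightly garbled) statement is the intended one. The paper does not actually prove this lemma but merely cites Durrett's textbook, where the argument given is exactly the one you wrote out, so there is nothing to compare.
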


\begin{proof}
See \cite[Chapter 3]{durrett2019probability}.
\end{proof}

The second property provides simple criteria for comparing a Poisson distribution with a random parameter to one with deterministic parameter. 

\begin{lemma} \thlabel{lem:compare}
Let $\Theta$ be a nonnegative random variable, $Y \sim \Poi(\Theta)$, and $Z \sim \Poi(\lambda)$ for a parameter $\lambda \geq 0$. The following are equivalent:
\begin{enumerate}[label = (\roman*)]
    \item $Y \succeq Z$.
    \item $\P(Y=0) \leq \P(Z=0)$.
    \item $\E[e^{-\Theta}] \leq e^{-\lambda}$.
    %\item $-\log (\E[e^{-\Theta}]) \geq \lambda $.
\end{enumerate}
\end{lemma}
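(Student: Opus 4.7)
The plan is to run the chain (i) $\Rightarrow$ (ii) $\Leftrightarrow$ (iii) $\Rightarrow$ (i); the first two links are bookkeeping and the real content is (iii) $\Rightarrow$ (i). Conditioning on $\Theta$ gives $\P(Y=0) = \E[e^{-\Theta}]$ while $\P(Z=0) = e^{-\lambda}$, so (ii) and (iii) are literally the same inequality, and (i) $\Rightarrow$ (ii) is stochastic dominance applied to the event $\{\cdot \geq 1\}$.

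For the main implication I would use the identity
\[
\P(\Poi(\theta) \geq k) \;=\; \int_0^\theta \frac{v^{k-1} e^{-v}}{(k-1)!}\, dv, \qquad k \geq 1,
\]
which follows by differentiating in $\theta$ (or by recognising the Poisson tail as the CDF of a $\mathrm{Gamma}(k,1)$). Taking expectations over $\Theta$ and subtracting the analogue at $\lambda$, Fubini yields
\[
\P(Y \geq k) - \P(Z \geq k) \;=\; \int_0^\infty g(v)\, \frac{v^{k-1} e^{-v}}{(k-1)!}\, dv,
\]
with $g(v) := \P(\Theta \geq v) - \ind{v \leq \lambda}$. Specialising to $k=1$ this is exactly $\P(Z=0) - \P(Y=0)$, so hypothesis (iii) is precisely the assertion that $\int_0^\infty g(v) e^{-v}\, dv \geq 0$.

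The decisive structural observation is that $g$ changes sign at $\lambda$: on $[0,\lambda]$ we have $g(v) = \P(\Theta \geq v) - 1 \leq 0$, and on $(\lambda,\infty)$ we have $g(v) = \P(\Theta \geq v) \geq 0$. Because $v^{k-1} - \lambda^{k-1}$ changes sign at the same point, $g(v)(v^{k-1} - \lambda^{k-1}) \geq 0$ pointwise, so integrating against $e^{-v}$ gives
\[
\int_0^\infty g(v)\, v^{k-1} e^{-v}\, dv \;\geq\; \lambda^{k-1} \int_0^\infty g(v) e^{-v}\, dv \;\geq\; 0
\]
by (iii). Hence $\P(Y \geq k) \geq \P(Z \geq k)$ for every $k \geq 1$, which is (i).

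The only mildly delicate step is setting up the Poisson-to-Gamma integral identity and the associated Fubini interchange; once $g$ is introduced the dominance reduces to the one-line single-crossing calculation above. A more abstract alternative would be to invoke the monotone-likelihood-ratio property of the Poisson family together with known results on the stochastic order of mixtures, but the direct computation is short enough that I would prefer it.
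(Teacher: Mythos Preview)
Your argument is correct. The implications (i)$\Rightarrow$(ii) and (ii)$\Leftrightarrow$(iii) are indeed bookkeeping, and your proof of (iii)$\Rightarrow$(i) via the Gamma--Poisson identity $\P(\Poi(\theta)\ge k)=\int_0^\theta v^{k-1}e^{-v}/(k-1)!\,dv$ together with the single-crossing observation for $g(v)=\P(\Theta\ge v)-\ind{v\le\lambda}$ is clean and complete; the Fubini step is justified since the integrand is nonnegative before subtraction. By contrast, the paper does not argue this lemma at all: it simply cites \cite[Theorem 3.1 (b)]{misra2003stochastic}. What your route buys is a short self-contained proof that avoids importing the general machinery of stochastic orders for mixtures; the price is that it is tailored to the Poisson family (through the Gamma representation of the tail), whereas the cited reference situates the result in a broader framework. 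Either is perfectly adequate for the paper's purposes.
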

\begin{proof}
See \cite[Theorem 3.1 (b)]{misra2003stochastic}.
\end{proof}

Lastly, we apply \thref{lem:compare} for the case of a thinned Poisson distribution.

\begin{lemma} \thlabel{lem:geo_compare}
Fix $c>0$, $p\in (0,1)$, and let $X$ be a nonnegative, almost surely finite random variable. Given $\lambda >0$, there exists $\mu_\lambda>0$ such that $\Poi(c \mu d^{-X})\succeq  \Poi(\lambda)$ for all $\mu \geq \mu_\lambda$. 
%with $$\Theta \sim \ind{N=0}\mu   + \ind{N\geq 1} \mu  d^{-1} x^{-N+1}$$ 
\end{lemma}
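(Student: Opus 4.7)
The plan is to apply \thref{lem:compare} with $\Theta = c\mu d^{-X}$ and verify condition (iii), namely $\E[e^{-\Theta}] \le e^{-\lambda}$, for all sufficiently large $\mu$. Since the other hypotheses in the statement ($p \in (0,1)$) do not appear in the conclusion, only the structural fact that $X$ is almost surely finite and nonnegative will be used.

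First I would set $f(\mu) := \E[\exp(-c\mu d^{-X})]$ and observe that $d^{-X} > 0$ almost surely because $X < \infty$ almost surely (and $d \ge 2$, $c > 0$). Consequently $c\mu d^{-X} \to \infty$ almost surely as $\mu \to \infty$, so the integrand $\exp(-c\mu d^{-X})$ tends to $0$ almost surely. Since this integrand is uniformly bounded by $1$, the dominated convergence theorem gives
\begin{equation}
\lim_{\mu \to \infty} f(\mu) = \E\!\left[ \lim_{\mu \to \infty} e^{-c\mu d^{-X}} \right] = 0.
\end{equation}

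Given $\lambda > 0$, the limit above allows me to choose $\mu_\lambda > 0$ such that $f(\mu) \le e^{-\lambda}$ for every $\mu \ge \mu_\lambda$ (one can even take $\mu_\lambda$ to be monotone in $\lambda$ since $f$ is nonincreasing in $\mu$). By condition (iii) of \thref{lem:compare} applied with $\Theta = c\mu d^{-X}$, this inequality is equivalent to $\Poi(c\mu d^{-X}) \succeq \Poi(\lambda)$, which is exactly the desired stochastic domination.

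There is no real obstacle here: the only mild point is ensuring that $\exp(-c\mu d^{-X}) \to 0$ pointwise, which requires $d^{-X} > 0$ almost surely, and this is precisely the content of the assumption that $X$ is almost surely finite. Everything else is immediate from dominated convergence and the previously stated Poisson comparison lemma.
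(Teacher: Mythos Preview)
Your proof is correct and follows essentially the same route as the paper: both invoke \thref{lem:compare} and use dominated convergence (with the trivial bound $1$) to show that $\E[e^{-c\mu d^{-X}}]=\P(Y=0)\to 0$ as $\mu\to\infty$. The only cosmetic difference is that the paper writes $\P(Y=0)$ as a sum over the (implicitly integer) values of $X$ and cites condition~(ii), whereas you keep it as an expectation and cite the equivalent condition~(iii); your observation that the hypothesis $p\in(0,1)$ plays no role is also accurate.
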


\begin{proof}
Let $Y \sim \Poi(c \mu d^{-X})$ and $Z\sim \Poi(\lambda)$. 
We will apply the criteria  from \thref{lem:compare} and prove that $\P(Y =0) \leq \P(Z=0) = e^{-\lambda}$ for $\mu$ large enough. We write
\begin{align}
\P(Y=0) &=  \textstyle \sum_{x=0}^\infty  e^{-c \mu d^{-x}} \P(X=x). \label{eq:start}
\end{align}
As this is bounded by $\sum_{x=0}^\infty \P(X=x)$, and each summand decreases to $0$ as $\mu \to \infty$, the dominated convergence theorem ensures that $\P(Y=0) \to 0$ as $\mu \to \infty$. This gives the claimed inequality for $\mu$ large enough.
\end{proof}

\subsection{Construction} \label{sec:construction}
Concisely, in the self-similar frog model active frogs follow lazy non-backtracking random walk paths and at most one active frog is allowed to move away from the root to each vertex, that same frog is the only frog able to activate the frogs there.  We now make this more precise.

We say that a walk or frog is \emph{killed} if the path being followed is terminated. This ensures that a killed frog no longer contributes in any way to the process. A \emph{lazy non-backtracking random walk started at $v \in \TT_d$} begins at $v$ at $t=0$. At $t=1$ it moves to a uniformly random neighbor of $v$. Any subsequent steps at $u \neq v$ the walk remains at $u$ with probability $1/(d+1)$ and otherwise moves to a uniformly random neighbor of $u$ that is not already contained in the path. Each lazy step at $u$ counts towards meeting the threshold at $u$. To preserve a coupling with the usual random walk, we add the additional rule that the walk is killed upon arrival to $\varnothing$ at any time $t\geq 1$. 
%A \emph{lazy  non-backtracking random walk floated by $m$ from $v$} is sampled according to a lazy non-backtracking random walk started at the vertex obtained after taking $m$ simple random walk steps from $v$. So the eventual starting vertex is in the ball of radius $m$ centered at $v$ and sampled according to the measure induced by $m$-steps of a random walk.

\begin{figure}
\begin{tikzpicture}[scale = 1.25]
\draw[thick] (-1,.5) -- (0,0);
\node[below] at (-1,.5) {$\varnothing$};
%\node[below] at (0,0) {$\varnothing'$};
\node[below] at (6,0) {$v$};
\node[right] at (7,0) {$v'$};
%\draw[dashed] (-1,.5) -- (0,1.2);

\node at (3.75,.25) {$\cdots$};
\draw[dashed] (2,0) -- (3,.5);
\draw[dashed] (2,0) -- (3,.7);
\draw[dashed] (2,0) -- (3,.9);
\draw[dashed] (0,0) -- (1,0.5);
\draw[red,fill=blue, color = red] (1,.5) circle (.4ex);
\node at (1.3,.6) {$v_{1,1}$};
\draw[dashed] (0,0) -- (1,0.7);
\draw[dashed] (0,0) -- (1,0.9);
\draw[dashed] (1,0) -- (2,0.5);
\draw[dashed] (1,0) -- (2,0.7);
\draw[dashed] (1,0) -- (2,0.9);
\draw[thick] (0,0) -- (6,0);
\draw[dashed] (5,0) -- (6,.5);
\draw[dashed] (5,0) -- (6,.7);
\draw[dashed] (5,0) -- (6,.9);
\draw[dashed] (6,0) -- (7,.5);
\draw[dashed] (6,0) -- (7,.7);
\draw[dashed] (6,0) -- (7,.9);
\draw[red,fill=blue, color = red] (7,.9) circle (.4ex);
\node at (7.4,1) {$v_{N,d-1}$};

\draw[thick] (6,0) -- (7,0);

\draw[red,fill=blue, color = blue] (6,0) circle (.4ex);
%\node at (6,0) {\textcolor{blue}{$\bullet$}};
   %\draw[|-|](0,-.75) -- ++(6,0) node [midway,fill=white] {$N$};
\end{tikzpicture}
\caption{The spine $S=S(\varnothing,v)$ and nerves $(v_{y,i})$ for $1 \leq y \leq N= N(\varnothing, v)$ and $1 \leq i \leq d-1$. $v_{1,1}$ and $v_{N,d-1}$ are indicated with red circles. The active frog started at $\varnothing$ visits the sites along $S$ until meeting the threshold at $v$. The blue circle represents the $\X_v$ frogs just activated at $v$. Sites between $\varnothing$ and $v$ are inert. The child vertex $v'$ of $v$ is guaranteed to be visited by an active frog. The nerves may only be traversed by frogs woken at $v$ or emerging from other nerves.} \label{fig:spine}
\end{figure}
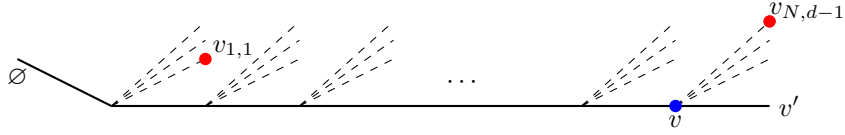

We build the self-similar threshold frog model in an iterative manner. The active frog at the root performs a lazy non-backtracking random walk started at $\varnothing$. Any vertex it visits and fails to reach the threshold at is declared {inert}. The sleeping frogs at inert sites can never be activated. Suppose that $v$ is the first site at which the frog from $\varnothing$ meets the threshold, thus activating the frogs at $v$. Allow the frog that arrived to $v$ to take one more step to a child vertex $v'$ of $v$. Each newly activated frog at $v$ will follow its own independent lazy non-backtracking random walk started at $v$. Let $S(\varnothing, v)$ be the \emph{spine} of vertices on the shortest path between $\varnothing$ and $v'$ (excluding $\varnothing$ and $v'$). Set $N(\varnothing, v) = |S(\varnothing, v)|$ to be the length of the spine. Refer to the edges connected to vertices in $S(\varnothing, v)$ that were not traversed by the frog started at $\varnothing$ as \emph{nerves}. We label them as $(v_{y,i})_{1 \leq y \leq N, 1 \leq i \leq d-1}$ with $N:= N(\varnothing, v)$. See Figure~\ref{fig:spine}.

At this point there is the active frog at $v'$ that originated from $\varnothing$ along with the $\X_v$ frogs just activated at $v$. Now that there may be several active frogs, we introduce the \emph{only-one rule} that whenever a non-backtracking frog moves away from the root to a vertex that has already been visited, that frog is killed. If one or more frogs simultaneously move away from the root to a never-visited site, then one is chosen at random to continue its path and the others are killed. The only-one rule does not apply for non-backtracking steps towards the root. 

Now we explain how the active frogs present at this stage move and wake new frogs. Each follows its lazy non-backtracking random walk and is possibly removed at each step due to the only-one rule. Suppose that a frog, say $f$, meets the threshold at a site $u$ and then moves to a child vertex $u'$ of $u$. Let $w$ be either the most recent site (not equal to $u$) at which $f$ activated the particles at, or, if no such site exists, the starting point of the lazy non-backtracking random walk followed by $f$. For example, if $f$ is the frog started from $\varnothing$ and this is the next iteration, then $w= v$.

With $u, u'$ and $w$ as before, we let $S(w,u)$ be the set of vertices on the shortest path between $w$ and $u'$ (excluding $w$ and $u'$). Set $N(w,u)= |S(w,u)|$ to be the number of vertices on this path. Each newly activated frog begins a lazy non-backtracking walk from $u$. The same rules apply to any newly activated frogs after this step. In this way, the process continues indefinitely. We will refer to this modified process as the \emph{self-similar threshold frog model} $\TSSFM_d(\tau, \mu)$. 

\subsection{Properties} 
Now we turn to proving some useful properties of the self-similar threshold frog model.
The first step is relating $\TSSFM_d(\tau,\mu)$ to the threshold frog model. In what follows we will say that the nonnegative random variable $X$ \emph{stochastically dominates} $Y$, denoted $X \succeq Y$, if there exists a coupling such that $X \geq Y$ almost surely. 
%This is equivalent to having $\P(X \geq a) \geq \P(Y \geq a)$ for all $a \geq 0$. 
We may sometimes write $X \succeq \Poi(\mu)$ to denote that $X$ stochastically dominates a Poisson random variable with mean $\mu$. 

\begin{lemma} \thlabel{lem:ss}
$V \succeq V'$ with $V$ the the total number of root visits in $\TFM_d(\tau, \mu)$ and $V'$ the total number of root visits in $\TSSFM_d(\tau, \mu)$.
\end{lemma}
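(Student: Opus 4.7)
The plan is to construct a coupling of $\TFM_d(\tau,\mu)$ and $\TSSFM_d(\tau,\mu)$ on a common probability space under which $V \geq V'$ almost surely. Both processes would use identical copies of $(\X_v, T_v)_{v \in \TT_d}$, and their random walks would be coupled so that, for each frog, the lazy non-backtracking walk in the SSTFM visits $\varnothing$ only when the associated simple random walk in the TFM also visits $\varnothing$.

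For the walk coupling, I would generate an independent simple random walk $W$ for each potential frog to drive the TFM, and derive the lazy non-backtracking walk $Y$ from $W$ by letting $Y$ be lazy on each step in which $W$ backtracks (i.e., $W$ returns to its immediately preceding vertex) and letting $Y$ mirror $W$'s non-backtracking moves otherwise. A direct computation of transition probabilities shows this produces a process with the correct LNBW distribution, and by construction every site visited by $Y$ is also visited by $W$, so every root visit of $Y$ is a root visit of $W$.

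With the walks coupled, I would proceed by induction on the time-ordered SSTFM events to show that (i) every SSTFM active frog has an active TFM counterpart, (ii) every activated site in the SSTFM is also activated in the TFM, and (iii) every SSTFM root visit can be matched with a distinct TFM root visit. The inductive step is routine given the walk coupling: the SSTFM's extra rules --- the only-one rule on outgoing steps, the requirement that a threshold $T_u$ be met by the first visitor's lazy steps alone, and the killing of walks upon arrival to $\varnothing$ --- are each strict restrictions relative to the TFM, so any event permitted in the SSTFM is a fortiori an event in the TFM under the coupling. Summing (iii) over frogs yields $V \geq V'$.

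The hard part is the walk coupling. Already after a single lazy step, $Y$ and $W$ sit at different vertices, and continuing the naive ``lazy $=$ backtrack'' rule no longer produces a valid walk. Making the extraction of $Y$ from $W$ consistent beyond the first divergence --- while keeping the correct marginal law for $Y$ and the trace-inclusion property --- is the only delicate point in the proof; the subsequent induction is pure bookkeeping.
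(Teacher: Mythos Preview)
Your plan is exactly the paper's approach: share the initial data $(\X_v,T_v)$, couple each frog's simple random walk to its lazy non-backtracking walk so that the latter's visits are contained in the former's, and then argue by monotonicity. The paper's own proof is a brief paragraph that appeals to the standard monotonicity of frog models under path-trimming and frog-killing (citing \cite{alves2002phase,hermon2018frogs}), and for the walk coupling simply invokes \cite[Proposition~A.4]{hoffman2019infection}. So the paper does not construct the coupling either; it outsources precisely the step you flag as ``the hard part.''

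One refinement on your outline: for your inductive step (ii) to go through, trace inclusion alone is not enough. To conclude that a self-similar activation at $u$ forces an activation in the full model, you need the coupled simple random walk to visit $u$ at least as many times as the lazy non-backtracking walk does (counting each lazy step as a visit), so that the $T_u$ visits accumulated at $u$ by the first lazy frog are matched by at least $T_u$ visits of its simple-random-walk counterpart. The walk coupling must therefore deliver visit-count domination at every vertex, not merely range containment; your phrase ``trace-inclusion property'' understates what is required. Your diagnosis of why the naive ``lazy $=$ backtrack'' recipe breaks after the first divergence is correct; the construction in the cited reference handles this, and either citing it or reproducing its argument would complete your proof.
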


\begin{proof}
 We rely on the intuitive, often cited monotonicity of the frog model that ignoring parts of the random walk paths and killing frogs decreases the total number of root visits in a given realization of the frog model (see the formal constructions of the frog model in \cite{alves2002phase} or \cite{hermon2018frogs} for more details). The random walk decomposition in \cite[Propostion A.4]{hoffman2019infection} immediately implies that the lazy non-backtracking random walk can be coupled to be a subset of a standard random walk. Since the remainder of the modifications in the self-similar threshold frog model involve killing frogs, it follows that there is a coupling such that the $V' \leq  V$.
\end{proof}

 Next we calculate the probability that the threshold is exceeded at a vertex.

\begin{lemma}\thlabel{lem:exceed}
Suppose that an active frog moves to an unvisited vertex $v$ in the self-similar threshold frog model.
Let $\alpha$ denote the probability that the sleeping frogs at $v$ become active. Recall that $T$ is a generic threshold random variable with distribution $\tau$, and $N$ is the length of the spine between activated vertices.
It holds that 
$$\alpha  = \E [(d+1)^{-T+1}] \text{ and } N \sim \Geo( \alpha).$$
\end{lemma}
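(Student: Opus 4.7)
The plan is to reduce both claims to elementary computations on the lazy non-backtracking walk at each fresh vertex it enters.

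First, I would compute $\alpha$. By the construction, the frog's arrival at $v$ from the previously visited site counts as one contribution toward the threshold. At each subsequent time step the walk either stays at $v$ with probability $1/(d+1)$ (adding another contribution) or moves to a non-path neighbor. In $\TT_d$ the unique path-neighbor of $v$ at this stage is its parent, so the $d$ non-path neighbors are precisely the children of $v$, and non-backtracking on a tree ensures that once the walk steps to a child it can never return to $v$. Therefore the threshold $T_v$ is met if and only if the walk performs $T_v - 1$ consecutive lazy steps before its first non-lazy step. Conditional on $T_v = T$ this event has probability $(d+1)^{-(T-1)}$, and averaging over $\tau$ yields $\alpha = \E[(d+1)^{-T+1}]$.

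To obtain $N \sim \Geo(\alpha)$, I would iterate. If the threshold fails at $v$ (probability $1 - \alpha$), then $v$ becomes inert and the walk moves to a uniformly chosen child of $v$, itself an unvisited vertex. The threshold there is drawn independently from $\tau$, and the lazy/move decisions at the new site are independent of those made at $v$, so the same calculation shows that activation at the new vertex occurs with probability $\alpha$, independently of the outcome at $v$. Iterating, the number of unvisited vertices the frog enters up to and including the first successful activation is a geometric variable with parameter $\alpha$ supported on $\{1,2,\ldots\}$, and this count coincides with $N = |S(w,u)|$ by definition of the spine.

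The main obstacle is essentially bookkeeping: one must verify that at each newly entered vertex the parent is truly the \emph{only} path-neighbor (so that moving away always lands in a fresh child of that vertex), which follows from the tree geometry combined with the fact that this excursion is entering territory not previously visited by any frog. One should also note that the only-one rule cannot disturb the analysis on this fresh descending path, since no competing frog is racing into these descendants. Once these two mild observations are in place, the formula for $\alpha$ is immediate from the memorylessness of the laziness coin, and the geometric distribution of $N$ follows from the independence of successive thresholds.
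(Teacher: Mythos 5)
Your proof is correct and follows the same approach as the paper: the paper computes $\alpha$ by noting that the arriving frog contributes $1 + G_v$ visits with $G_v \sim \Geo_0(d/(d+1))$, so $\alpha = \P(G_v \geq T_v - 1) = \E[(d+1)^{-T+1}]$, which is exactly your laziness-coin calculation phrased slightly differently. In fact you are somewhat more complete: the paper's proof only derives the formula for $\alpha$ and leaves the assertion $N \sim \Geo(\alpha)$ implicit, whereas you explicitly spell out the independence across successive spine vertices (fresh i.i.d.\ thresholds, fresh laziness coins, tree geometry forcing the non-backtracking walk downward) that makes $N$ geometric, which is a worthwhile bit of bookkeeping to include.
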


\begin{proof}
  Let $G_v \sim \Geo_0(d/(d+1))$. In the self-similar threshold frog model, the frog that moves to $v$ contributes 1 visit immediately plus $G_v$ more visits. So, $\alpha= \P(G_v \geq T_v -1)$. Conditioning on the value of $T_v$ gives claimed equality.
 %$$\P(G_v \geq T_v) = \sum_{k=1}^\infty (d+1)^{-k} \tau(k) = \E (d+1)^{-T}.$$
\end{proof}

We say that a nerve $v_{y,i}$ is \emph{activated} if an active frog moves to $v_{y,i}$ in $\TSSFM_d(\tau, \mu)$. Self-similar frog models earn their name because the number of active frogs that return from activated nerves can be expressed as identically distributed random variables. This gives a recursive distributional equation for the total number of visits to the root. In the following lemma, we abuse notation and write $\Bin(Z,Q)$ to represent the \emph{binomial thinning} random variable that, conditional on $Z=z$ and $Q=q$, has a $\Bin(z,q)$ distribution.

\begin{lemma} \thlabel{lem:rec}
In $\TSSFM_2(\tau, \mu)$, let $S(\varnothing, v)$ be the spine with nerves $(v_{y,i})$ for $1 \leq y \leq N:=N(\varnothing, v)$ and $1 \leq i \leq d-1$. Define the indicator random variables $A_{y,i} = \ind{\text{nerve $v_{y,i}$ is activated}}$.
%and let $(a_y)_{y=0}^N \in \{0,1\}^{N+1}.$ 
Let $V'$ be the total number of root visits in $\TSSFM_2(\tau,\mu)$ and $V_{v'}', V_{1,1}',\hdots V_{N,d-1}'$ be independent and identically distributed copies of $V'$.  It holds that 
\begin{align}
    V'&\overset{d}= \Bin\left(\X_v,d^{-N+1}/(d+1)\right) + \Bin\left(V_{v'}',d^{-N}\right) +  \sum_{y,i} A_{y,i} \Bin\left(V_{y,i}', d^{y-N}\right). \label{eq:rec}
\end{align}
The sum is over all $(y,i)$ in $\{1,\hdots N\}\times \{1,\hdots, d-1\}$. 
\end{lemma}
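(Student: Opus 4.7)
The plan is to condition on the initial walker's trajectory from $\varnothing$, which fixes the spine $S(\varnothing, v)$, its length $N \sim \Geo(\alpha)$ (by \thref{lem:exceed}), the nerves $(v_{y,i})$, and the number $\X_v$ of frogs activated at $v$. Every subsequent visit to $\varnothing$ in $\TSSFM_d(\tau,\mu)$ is then partitioned into three mutually independent sources: (i) a walk by an $\X_v$-frog from $v$ that reaches $\varnothing$; (ii) a walker arising in the self-similar sub-process in the subtree rooted at $v'$; (iii) a walker arising in a self-similar sub-process in an activated nerve subtree.

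For (i), I would compute the single-frog success probability by tracking the walk step by step. The non-lazy first step at $v$ goes to the parent with probability $1/(d+1)$; at each subsequent spine vertex the walker sees $d$ unvisited neighbors and selects the parent with conditional probability $1/d$. Multiplying along the spine gives $d^{-N+1}/(d+1)$, and independence of the $\X_v$ walks yields the $\Bin(\X_v, d^{-N+1}/(d+1))$ term.

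For (ii) and (iii), the key is self-similarity. Each of the subtrees rooted at $v'$ and at $v_{y,i}$ is isomorphic to $\TT_d$, and by the only-one rule the process restricted to each such subtree is an independent copy of $\TSSFM_d$. Consequently the total visit counts to these sub-process roots are iid copies of $V'$, labelled $V'_{v'}$ and $V'_{y,i}$. Each such visit corresponds to a walker arriving at its sub-process root from below, which in the full process continues and independently attempts to propagate up the spine to $\varnothing$; an analogous spine-traversal computation gives the per-visit success probabilities $d^{-N}$ from $v'$ and $d^{y-N}$ from $v_{y,i}$. Summing the three independent contributions, with $A_{y,i}$ indicating whether each nerve is activated at all, produces the RDE.

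The trickiest step is verifying the propagation probabilities for (ii) and (iii) in the presence of the only-one rule. As a returning walker climbs the spine, at each intermediate vertex $v_k$ it may also attempt a move to a sibling nerve or to $v_{k+1}$, and such moves are either killed (if the target is already visited) or else activate a new sub-process whose contribution is separately accounted for elsewhere in the RDE. Arguing that the net single-visit contribution of each returning walker is in fact an independent Bernoulli with the stated parameter—and that these Bernoullis, together with the $\X_v$ walks, are jointly independent across all sub-processes—requires careful bookkeeping of the interactions at shared spine vertices.
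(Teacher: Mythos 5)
Your overall strategy matches the standard one: condition on the initial frog's trajectory to fix the spine and $N$, split the returns to $\varnothing$ into contributions from the $\X_v$ frogs at $v$, from the sub-process beyond $v'$, and from the activated nerve sub-processes, invoke self-similarity to identify the latter two with i.i.d.\ copies of $V'$, and thin each source by a spine-traversal probability. The paper's own proof is essentially a citation to \cite[Proposition 6]{hoffman2017recurrence} and \cite[Lemma 3.5]{johnson2016critical} plus the remark that the random spine length $N$ does not change the argument, so in spirit you and the paper are doing the same thing.

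The concrete problem is the nerve-thinning parameter. You assert that an ``analogous spine-traversal computation'' yields $d^{y-N}$ for a walker returning from $v_{y,i}$, but that computation, done honestly, cannot produce $d^{y-N}$. The nerve $v_{y,i}$ hangs off the spine vertex at distance $y$ from $\varnothing$ (see Figure~1: $v_{1,1}$ is adjacent to the vertex nearest $\varnothing$, $v_{N,d-1}$ is adjacent to $v$). A non-backtracking walker that has stepped from $v_{y,i}$ onto the spine must then traverse $y$ edges toward $\varnothing$, succeeding at each with probability $1/d$, for a total of $d^{-y}$; $N$ does not enter at all. Your proposed $d^{y-N}$ even has the wrong monotonicity: it is largest for $y=N$ (the nerve farthest from $\varnothing$) and smallest for $y=1$ (the nerve adjacent to $\varnothing$). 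You have in fact reproduced what appears to be a typo in the statement; the operator formula in \thref{lem:prec}, the exponents in \thref{lem:A}, and the display \eqref{eq:Vp} in the proof of \thref{prop:dom} all use $d^{-y}$, and $d^{-y}$ is what falls out of the explicit $\B$-operator construction (a particle placed at the leaf $v_{y,i}$ takes one forced step to spine vertex $y$ and then $y$ independent $1/d$ steps to $\varnothing$). You should catch that your own stated computation cannot justify your own stated answer.

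A related but smaller imprecision: you describe the returning walkers as ``arriving at the sub-process root from below'' and then ``attempting to propagate up the spine,'' which would give a first-step factor of $1/d$ (the arriving walker has $d$ non-backtracking choices, one of which is toward the spine), producing $d^{-(N+1)}$ and $d^{-(y+1)}$ rather than $d^{-N}$ and $d^{-y}$. The clean way to obtain the paper's exponents is the one used implicitly in the $\B$-operator in Section~\ref{sec:operator}: the $V'$ count for a sub-process is identified with the number of particles placed at the leaf $v'$ (resp.\ $v_{y,i}$) of the abstract spine graph, and those particles take a deterministic first step onto the spine. That identification is exactly the content of the ``careful bookkeeping'' you flag at the end, and it is the part of the argument the paper leaves to the cited prior work; simply asserting the Bernoullis are independent with the stated parameters is where the real work lies, and your proposal leaves it open.
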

\begin{proof}
This self-similarity has been observed multiple times starting with \cite[Proposition 6]{hoffman2017recurrence}. See also \cite[Lemma 3.5]{johnson2016critical}. It is a direct consequence of restricting to non-backtracking random walk paths and the only-one rule. The difference with the threshold frog model is that we consider a random number $N$ of nerves. However, this does not impact the way self-similarity is deduced. We remark that the Binomial thinnings use the probability that a non-backtracking active frog that has moved to the spine will reach $\varnothing$. The probability is slightly different for the $\X_v$ activated frogs at $v$ since the first non-backtracking step has $d+1$ rather than $d$ possibilities.
\end{proof}

We will require the fact that, by taking $\mu$ large enough, $V'$ can be made to dominate a large Poisson random variable.

\begin{lemma}\thlabel{lem:start}
Given  $\lambda_0 \geq 0$, there exists $\mu_0 \geq 0$ such that for all $\mu \geq \mu_0$ we have $V' \succeq \Poi( \lambda_0)$.
\end{lemma}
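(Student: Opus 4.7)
The plan is to extract a simple lower bound from the recursive distributional equation in \thref{lem:rec}. Since each of $\Bin(V'_{v'}, d^{-N})$ and $\sum_{y,i} A_{y,i}\Bin(V'_{y,i}, d^{y-N})$ on the right-hand side of \eqref{eq:rec} is nonnegative, dropping them gives the stochastic lower bound
\begin{align}
V' \succeq \Bin\left(\X_v, \frac{d^{-N+1}}{d+1}\right).
\end{align}
This captures only the contribution of frogs activated at the first site $v$ where the initial frog meets its threshold. Intuitively, increasing $\mu$ directly inflates $\X_v$, and I want to leverage this to force the right-hand side above any prescribed Poisson random variable.

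Next I would reduce this binomial thinning to a single Poisson random variable with random mean. Because $\X_v \sim \Poi(\mu)$ is independent of $N$ (the spine length is determined by the initial frog's lazy non-backtracking path and the thresholds it encounters, which are independent of the occupancy at $v$), Poisson thinning as recorded in \thref{lem:thinning} gives that conditionally on $N$, the right-hand side is distributed as $\Poi\bigl(\mu d^{-N+1}/(d+1)\bigr)$. Marginalizing over $N$, I can identify this with $\Poi(c \mu d^{-X})$ for $c = 1/(d+1)$ and $X = N-1$.

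The conclusion then follows by invoking \thref{lem:geo_compare} with these choices. By \thref{lem:exceed}, $N \sim \Geo(\alpha)$ where $\alpha = \E[(d+1)^{-T+1}] > 0$ under the standing assumption $\tau(\infty)<1$, so $X = N-1$ is nonnegative and almost surely finite. The lemma then supplies a $\mu_0$ such that $\Poi(c\mu d^{-X}) \succeq \Poi(\lambda_0)$ for every $\mu \geq \mu_0$, and combining with the initial domination proves the claim. There is no substantive obstacle here: all the work has been done in the preceding lemmas, and the only step that requires attention is verifying that $X = N-1$ satisfies the hypotheses of \thref{lem:geo_compare}, which reduces to the condition $\tau(\infty) < 1$ built into the model.
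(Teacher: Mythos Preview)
Your argument is correct and matches the paper's proof essentially line for line: drop the nonnegative terms in \thref{lem:rec} to get $V'\succeq \Bin(\X_v, d^{-N+1}/(d+1))$, apply Poisson thinning to rewrite this as $\Poi\bigl((\mu/(d+1))d^{-N+1}\bigr)$, and invoke \thref{lem:geo_compare}. The only addition is that you spell out the identification $c=1/(d+1)$, $X=N-1$ and the verification that $X$ is a.s.\ finite, which the paper leaves implicit.
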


\begin{proof}
It follows from \thref{lem:rec} that $V' \succeq \Bin(\X_v, d^{-N+1} /(d+1))$. Since $\X_v \sim \Poi(\mu)$, Poisson thinning  ensures that $V' \succeq \Poi( (\mu/(d+1)) d^{-N+1} )$. The existence of $\mu_0$ follows immediately from \thref{lem:geo_compare}.
\end{proof}

\section{A self-similar frog model operator} \label{sec:operator}

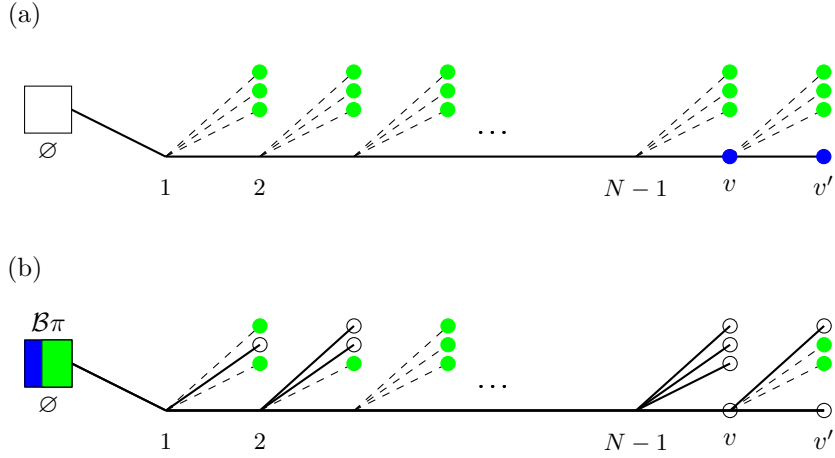
\begin{figure}
\begin{tikzpicture}[scale = 1.25]
\node at (-1.5,1.5) {(a)};
\draw[thick] (-1,.5) -- (0,0);
\draw[thick] (0,0) -- (6,0);
\draw[dashed] (0,0) -- (1,0.7);
\draw[dashed] (0,0) -- (1,0.5);
\draw[dashed] (0,0) -- (1,0.9);
\draw[fill = green, color = green] (1,0.5) circle (.5ex);
\draw[fill = green, color = green] (1,0.7) circle (.5ex);
\draw[fill = green, color = green] (1,0.9) circle (.5ex);
\draw[dashed] (1,0) -- (2,0.5);
\draw[dashed] (1,0) -- (2,0.7);
\draw[dashed] (1,0) -- (2,0.9);
\draw[fill = green, color = green] (2,0.5) circle (.5ex);
\draw[fill = green, color = green] (2,0.7) circle (.5ex);
\draw[fill = green, color = green] (2,0.9) circle (.5ex);
\draw[dashed] (2,0) -- (3,.5);
\draw[dashed] (2,0) -- (3,.7);
\draw[dashed] (2,0) -- (3,.9);
\draw[fill = green, color = green] (3,0.5) circle (.5ex);
\draw[fill = green, color = green] (3,0.7) circle (.5ex);
\draw[fill = green, color = green] (3,0.9) circle (.5ex);
\draw[dashed] (5,0) -- (6,.5);
\draw[dashed] (5,0) -- (6,.7);
\draw[dashed] (5,0) -- (6,.9);
\draw[fill = green, color = green] (6,0.5) circle (.5ex);
\draw[fill = green, color = green] (6,0.7) circle (.5ex);
\draw[fill = green, color = green] (6,0.9) circle (.5ex);
\draw[dashed] (6,0) -- (7,.5);
\draw[dashed] (6,0) -- (7,.7);
\draw[dashed] (6,0) -- (7,.9);
\draw[fill = green, color = green] (7,0.5) circle (.5ex);
\draw[fill = green, color = green] (7,0.7) circle (.5ex);
\draw[fill = green, color = green] (7,0.9) circle (.5ex);
\draw[thick] (6,0) -- (7,0);
\draw[fill=blue, color = blue] (6,0) circle (.5ex);
\draw[fill=blue, color = blue] (7,0) circle (.5ex);
\node[below] at (0,-.15) {\small $1$};
\node[below] at (1,-.15) {\small $2$};
\node[below] at (6,-.15) {$v$};
\node[below] at (7,-.1) {$v'$};
\draw[] (-1,.25) -- (-1,.75) -- (-1.5,.75)  -- (-1.5,.25) -- node[below] {$\varnothing$} (-1,.25)--cycle;
\node at (3.5,.25) {$\cdots$};
\node at (3.5,.25) {$\cdots$};
\node at (3.5,.25) {$\cdots$};
\node[below] at (5,-.15) {\small $N-1$};
\end{tikzpicture}

\vspace{ .5 cm}

\begin{tikzpicture}[scale = 1.25]
\node at (-1.5,1.5) {(b)};
\draw[thick] (-1,.5) -- (0,0);
\draw[thick] (0,0) -- (6,0);
\draw[thick] (6,0) -- (7,0);

\draw[] (-1,.25) -- (-1,.75) -- (-1.5,.75)  -- (-1.5,.25) -- node[below] {$\varnothing$} (-1,.25)--cycle;
\node[above] at (-1.25, .75) {$\B \pi$};
\draw[fill=blue] (-1,.25) -- (-1,.75) -- (-1.5,.75)  -- (-1.5,.25) -- (-1,.25)--cycle;

\draw[fill = green] (-1,.25) -- (-1,.75) -- (-1.32,.75)  -- (-1.32,.25) -- (-1.32,.25)--cycle;
\node[below] at (0,-.15) {\small $1$};
\node[below] at (1,-.15) {\small $2$};
\node[below] at (5,-.15) {\small $N-1$};
\draw[thick] (-1,.5) -- (0,0);
\draw[thick] (0,0) -- (6,0);
\draw[thick] (0,0) -- (1,0.7);
\draw[dashed] (0,0) -- (1,0.5);
\draw[dashed] (0,0) -- (1,0.9);
\draw[fill = green, color = green] (1,0.5) circle (.5ex);
\draw[] (1,0.7) circle (.5ex);
\draw[fill = green, color = green] (1,0.9) circle (.5ex);
\draw[dashed] (1,0) -- (2,0.5);
\draw[thick] (1,0) -- (2,0.7);
\draw[thick] (1,0) -- (2,0.9);
\draw[fill = green, color = green] (2,0.5) circle (.5ex);
\draw[] (2,0.7) circle (.5ex);
\draw[] (2,0.9) circle (.5ex);
\draw[dashed] (2,0) -- (3,.5);
\draw[dashed] (2,0) -- (3,.7);
\draw[dashed] (2,0) -- (3,.9);
\draw[fill = green, color = green] (3,0.5) circle (.5ex);
\draw[fill = green, color = green] (3,0.7) circle (.5ex);
\draw[fill = green, color = green] (3,0.9) circle (.5ex);
\draw[thick] (5,0) -- (6,.5);
\draw[thick] (5,0) -- (6,.7);
\draw[thick] (5,0) -- (6,.9);
\draw[] (6,0.5) circle (.5ex);
\draw[] (6,0.7) circle (.5ex);
\draw[] (6,0.9) circle (.5ex);
\draw[dashed] (6,0) -- (7,.5);
\draw[dashed] (6,0) -- (7,.7);
\draw[thick] (6,0) -- (7,.9);
\draw[fill = green, color = green] (7,0.5) circle (.5ex);
\draw[fill = green, color = green] (7,0.7) circle (.5ex);
\draw[] (7,0.9) circle (.5ex);
\draw[thick] (6,0) -- (7,0);
\draw[] (6,0) circle (.5ex);
\draw[] (7,0) circle (.5ex);
\draw[thick] (0,0) -- (6,0);
\draw[thick] (6,0) -- (7,0);
\node[below] at (6,-.15) {$v$};
\node[below] at (7,-.1) {$v'$};

\node at (3.5,.25) {$\cdots$};
\node at (3.5,.25) {$\cdots$};
\node at (3.5,.25) {$\cdots$};
\end{tikzpicture}
\caption{The initial configuration in (a) has $\Poi(\mu)$ active particles at $v$ and a $\pi$-distributed number of active particles at $v'$ (in blue). The $(d-1)N$ sites $v_{y,i}$ have independent $\pi$-distributed numbers of sleeping particles (in green). Active particles perform non-backtracking random walk and halt upon reaching a leaf. The figure at (b) shows the stabilized state. The operator $\B \pi$ is the law for the number of particles halted at $\varnothing$. We do not depict the active particles halted at $v'$ or any of the visited $v_{y,i}$.} \label{fig:A}
\end{figure}

\thref{lem:rec} gives that $V'$ can be described in terms of which nerves of $S(\varnothing,v)$ are visited. This inspires the following more general version that replaces the i.i.d.\ copies of $V'$ with arbitrary random variables. The construction below is a modification of \cite[Section 2.2]{hoffman2016transience}.

Fix a probability measure $\pi$ on the nonnegative integers. 
Let $N \sim \Geo(\alpha)$ with $\alpha$ as defined in \thref{lem:exceed}. Start with a path of length $N+2$. 
Call the first site of the path $\varnothing$ and the last two sites $v$ and $v'$. Label the vertices in $(\varnothing, v]$ in order as $1,2,\hdots, N$ and form the graph $G$ by attaching vertices $v_{y,1},\hdots, v_{y,d-1}$ to each $y$ in $(\varnothing, v]$. 
See Figure~\ref{fig:A}. Let $\X_v \sim \Poi(\mu)$ and $W_{1,1},...,W_{N,d-1}$ be i.i.d.\ random variables distributed according to $\pi$. 
Place $\X_v$ particles at $v$ and $W_{y,i}$ particles at each $v_{y,i}$. Other vertices start with no particles.
Activated particles perform independent  non-backtracking random walks. When an active particle reaches a leaf of $G$ that is distinct from its starting location, it halts there. 
Initially, only the particles at $v$ and at $v'$ are active, the others are sleeping. When an active particle moves to a site containing sleeping particles, all particles there activate (the threshold is one). Let $A_{y,i}$ be an indicator for the event that the particles at $v_{y,i}$ are eventually activated. We define $\B \pi$ to be the law for the number of particles that halt at $\varnothing$. A restatement of \thref{lem:rec} in these terms is that the law of $V'$ is a fixed point of $\B$.

\begin{lemma}\thlabel{lem:fixed}
Let $\nu$ be the probability measure associated to $V'$ in $\TSSFM_2(\tau,\mu)$. It holds that $\B \nu  = \nu$. 
\end{lemma}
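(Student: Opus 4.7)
The plan is to observe that the operator $\B$ is constructed so that $\B\pi$ realizes, in distribution, the right-hand side of the recursive identity \eqref{eq:rec}, with each iid copy $V'_\cdot$ of $V'$ replaced by an iid $\pi$-sample. Once this is established, \thref{lem:rec} immediately gives $\B\nu = \nu$.

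To carry this out I would first unpack the three disjoint sources of particles halting at $\varnothing$ in the $\B\nu$ process: the $\X_v \sim \Poi(\mu)$ active particles at $v$, the $W_{v'} \sim \nu$ active particles at $v'$, and, for each nerve $v_{y,i}$ with activation indicator $A_{y,i} = 1$, the $W_{y,i} \sim \nu$ particles there. A direct calculation for the non-backtracking walk on the caterpillar $G$ shows that the probability a single particle from each source reaches $\varnothing$ matches the corresponding coefficient in \eqref{eq:rec}, namely $d^{-N+1}/(d+1)$, $d^{-N}$, and $d^{y-N}$: the first reflects the uniform initial step among the $d+1$ neighbors of $v$, and the other two follow a forced first step into the spine followed by repeated $1/d$-probability choices along the spine toward $\varnothing$. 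Given $N$, $\X_v$, and the $W$'s, the three counts at $\varnothing$ are then independent Binomial thinnings of the source counts by those probabilities, and summing gives exactly the three summands on the right-hand side of \eqref{eq:rec}.

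The remaining step is to verify that the joint law of the activation indicators $(A_{y,i})$ arising in the $\B\nu$ process equals the one from \thref{lem:rec}. In both settings $A_{y,i}$ records whether the nerve $v_{y,i}$ is ever visited during the dynamics on the spine-plus-nerves subprocess, driven by the same $\Poi(\mu)$ input at $v$, $\nu$ input at $v'$, and cascading activations triggered whenever a previously unvisited nerve is reached. The graph $G$ is designed to retain exactly the edges that lazy non-backtracking walks in $\TSSFM_d$ can traverse before either halting at $\varnothing$ or leaving the spine-plus-nerves region into a deeper subtree, while the only-one rule trims everything else; hence the two joint laws coincide. Combining with the previous paragraph identifies $\B\nu$ with the law of the right-hand side of \eqref{eq:rec}, and invoking \thref{lem:rec} yields $\B\nu = \nu$. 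I expect the main obstacle to be this last identification, which amounts to carefully matching the finite $\B$-process on $G$ against the spine-plus-nerves portion hidden inside the iterative construction of $\TSSFM_d$ in Section~\ref{sec:ss}.
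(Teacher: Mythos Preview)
Your proposal is correct and follows exactly the route the paper takes: the paper's proof simply asserts that the description of $V'$ in \thref{lem:rec} ``couples in a straightforward way'' with the definition of $\B\nu$ and defers details to \cite[Lemma 9]{hoffman2016transience}, and what you have written is precisely an explicit unpacking of that coupling (sources at $v$, $v'$, and activated nerves; matching thinning probabilities; identifying the joint law of the $A_{y,i}$). Your anticipated ``main obstacle''---matching the activation cascade on $G$ with the spine-plus-nerves subprocess inside $\TSSFM_d$---is indeed the substantive step, and it goes through by the abelian/monotonicity reasoning you outline.
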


\begin{proof}
The description of $V'$ in \thref{lem:rec} couples in a straightforward way with the definition of the random variable associated to $\mathcal B \nu$. More details can be found in \cite[Lemma 9]{hoffman2016transience}.
\end{proof}

We also note that $\B \pi$ is monotone. 

\begin{lemma}\thlabel{lem:mono}
Say that $\pi \preceq \pi'$ if $\pi([0,x)) \geq \pi'([0,x))$ for all $x \geq 0$. If $\pi \preceq \pi'$, then $\B \pi \preceq \B \pi'$.
\end{lemma}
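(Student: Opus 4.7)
My plan is a direct coupling argument. Since $\pi\preceq\pi'$ in the usual stochastic order, the quantile coupling (applied independently at each index) realizes the nerve-counts $(W_{y,i})$ and $(W_{y,i}')$ on a common probability space with $W_{y,i}\le W_{y,i}'$ almost surely, and the same can be done for the counts at $v'$. On that space I would use one and the same realization of the spine length $N$, the Poisson count $\X_v$, and, for each potential particle (indexed by its starting site and a positive integer), one common pre-assigned non-backtracking random walk path. The $\pi$-process and the $\pi'$-process then run on identical randomness and differ only in how many sleeping particles the nerves and $v'$ carry.

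The essential feature of $\B$ is that it has no only-one rule: once activated, each particle moves independently along its pre-assigned walk and halts at the first leaf it encounters (other than its start). Hence a given potential particle contributes to the count at $\varnothing$ if and only if (i) its pre-assigned path terminates at $\varnothing$ and (ii) it is ever activated. Condition (i) is the same across the coupling, so the task reduces to monotonicity of activation. A nerve $v_{y,i}$ is activated precisely when the trajectory of some active particle crosses it. I would prove by induction on a ``generation of activation'' that under the coupling every nerve activated in the $\pi$-process is also activated in the $\pi'$-process: the initial active particles at $v$ and $v'$ match up to the extra $W_{v'}'-W_{v'}$ particles on the $\pi'$ side, and if a nerve is reached in the $\pi$-process by a particle of smaller generation, then by induction that particle is activated in the $\pi'$-process too (with the same index available because $W_{y,i}\le W_{y,i}'$) and follows the same trajectory, activating the same nerve.

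Combining the two ingredients, every particle that halts at $\varnothing$ in the $\pi$-process corresponds under the coupling to a particle in the $\pi'$-process (matched by index, using $W_{y,i}\le W_{y,i}'$) whose pre-assigned path ends at $\varnothing$ and which is also activated. Summing yields the pointwise inequality for the number of halts at $\varnothing$, hence $\B\pi\preceq\B\pi'$. The only real subtlety is that lateral propagation between sibling nerves $v_{y,i}$ and $v_{y,j}$ at the same spine vertex is possible, so activation cascades in nontrivial ways; however, because the walker's path is shared across the two coupled processes, the generation-based induction absorbs this without any extra work, and I expect this to be the only place where care is needed.
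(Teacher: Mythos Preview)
Your proposal is correct and takes essentially the same approach as the paper: the paper's proof is the single sentence that $\B\pi$ is a root-visit count in a frog model and hence monotone under adding particles, and your coupling argument is precisely the standard way one justifies that monotonicity. The only difference is level of detail---you spell out the quantile coupling, the shared path assignments, and the generation-by-generation induction that the paper leaves implicit.
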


\begin{proof}
$\B \pi$ is the number of visits to a site in a frog model and thus is monotone when adding additional particles.
\end{proof}

The operator $\B$ is particularly simple when acting on the Poisson distribution.

\begin{lemma} \thlabel{lem:prec}
Let $\lambda \geq 0$. For $\pi \sim \Poi(\lambda)$ it holds that
\begin{align}
    \B \pi \sim \Poi\left( \f{\mu}{d+1} d^{-N+1}  + \lambda d^{-N} - \lambda  \sum_{y,i} A_{y,i} d^{-y}  \right). \label{eq:poi}
\end{align}
The sum is over all $(y,i)$ in $\{1,\hdots N\}\times \{1,\hdots, d-1\}$
\end{lemma}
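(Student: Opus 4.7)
Proof proposal. The plan is to condition on $N$ and the full activation vector $(A_{y,i})_{y,i}$ and identify the conditional law of $\B \pi$ as a Poisson by applying Poisson thinning (\thref{lem:thinning}) to each source separately. With $\pi=\Poi(\lambda)$, every initial particle count is Poisson: $\Poi(\mu)$ at $v$, $\Poi(\lambda)$ at $v'$, and $\Poi(\lambda)$ at each nerve $v_{y,i}$ (the last being sleeping until activated), and all of these counts are independent.

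First I would compute, for each source $s\in\{v,v',v_{y,i}\}$, the probability $p_s$ that a non-backtracking walk from $s$ terminates at $\varnothing$. Each spine vertex has $d+1$ neighbors (hence $d$ non-backtracking continuations after the first step), while $\varnothing$, $v'$, and the nerves are degree-one leaves at which the walk halts. A direct enumeration then gives $p_v = d^{-N+1}/(d+1)$, $p_{v'} = d^{-N}$, and $p_{v_{y,i}} = d^{-y}$. Applying \thref{lem:thinning} inside each source, the numbers of particles from $s$ halting at the various leaves of $G$ are mutually independent Poissons with mean (source intensity) times (walk probability to that leaf). Combined with the independence of walks across different sources, the three families of ``$\to\varnothing$'' counts are mutually independent and jointly independent of all the ``$\to$ nerve'' counts. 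Conditional on $N$ and $(A_{y,i})$, the total halted at $\varnothing$ is then a sum of independent Poissons with means $\mu p_v$, $\lambda p_{v'}$, and, for those $(y,i)$ with $A_{y,i}=1$, an additional $\lambda p_{v_{y,i}}$. Summing these means produces the Poisson parameter recorded in \eqref{eq:poi}.

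The delicate part is justifying that the conditioning on $(A_{y,i})$ does not destroy the Poisson-thinning independence above. I would argue by induction on the order in which nerves activate that each $A_{y,i}$ is a deterministic function only of the ``$\to$ nerve'' destination counts of sources other than $v_{y,i}$ itself: a nerve is activated precisely when some already-active source sends a walk to it, and by induction the previously-activated sources reveal only ``$\to$ nerve'' information about their own walks. By the within-source Poisson-thinning independence, this vector of ``$\to$ nerve'' counts is independent of every ``$\to\varnothing$'' count, and because the walks from $v_{y,i}$ are independent of all other walks, $A_{y,i}$ itself is independent of the number of $v_{y,i}$-particles that would reach $\varnothing$. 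This is enough to upgrade the marginal independence of the ``$\to\varnothing$'' Poissons to conditional independence given $N$ and $(A_{y,i})$, completing the proof.

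The main obstacle is exactly this independence argument: the activation cascade can entangle walks from many distinct sources, so care is needed in separating the ``$\to$ nerve'' information that drives activation from the ``$\to \varnothing$'' information that determines $\B\pi$. I expect this to be handled cleanly by the induction sketched above, which leverages the coordinatewise independence provided by Poisson thinning on each source.
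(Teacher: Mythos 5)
Your proposal is correct and follows the same approach as the paper: per-source Poisson thinning (\thref{lem:thinning}) plus additivity of independent Poissons, with the key observation that the activation vector $(A_{y,i})$ is a function of the ``to-nerve'' destination counts, which thinning renders independent of the ``to-$\varnothing$'' counts, so that conditioning on $(N, A)$ preserves the Poisson law at $\varnothing$. The paper's own proof is a two-sentence sketch that leaves this independence point implicit, so your fleshing-out of it is exactly the right thing to check; one tiny imprecision is that $A_{y,i}$ is a reachability functional of the \emph{entire} to-nerve matrix rather than only of sources other than $v_{y,i}$, but this does not affect the independence from the to-$\varnothing$ counts. Note also that the minus sign before $\lambda\sum_{y,i}A_{y,i}d^{-y}$ in the statement of \eqref{eq:poi} is a typo (the parameter would otherwise be negative, and \eqref{eq:Vp} carries a plus); your derivation correctly produces a plus.
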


\begin{proof}
It follows from Poisson thinning (see \thref{lem:thinning}) that $\Bin( \Poi(\lambda), p) \sim \Poi(p \lambda)$. Applying addititivity of the Poisson distribution, and the independence property of Poisson thinning gives \eqref{eq:poi}. 
\end{proof}

\begin{lemma} \thlabel{lem:A}
 Write $\a = (a_{y,i})_{1\leq y \leq N, 1 \leq i \leq d-1 } \in \{0,1\}^{(d-1)n} $ and let $A^\a$ be the event that the set of nerves $\{ v_{y,i} \text{ such that } a_{y,i}=1\}$ all get activated in the variant of the system used to define $\B$ that has no particles at each nerve $v_{x,j}$ with $a_{x,j}=0$ (so $W_{x,j} =0$ for such $x,j$). Set $A = (A_{y,i})$. It holds that
\begin{align}
\P(A = \a \mid N=n)
&= \exp\Bigl( - \sum_{x,j } (1-a_{x,j}) \Bigl[ \tfrac \mu {d+1} d^{x-n}   +  \lambda d^{x-n-1} +  \lambda\sum_{y,i}  a_{y,i} d^{- |y-x|-1} \Bigr] \Bigr) \\
&\hspace{7.25 cm} \times \P(A^\a \mid N=n).
\end{align}
The sum is over all $(x,j)$ and $(y,i)$ in $\{1,\hdots N\}\times \{1,\hdots, d-1\}$
\end{lemma}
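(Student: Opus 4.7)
The plan is to exploit Poisson thinning (\thref{lem:thinning}) together with the independence it yields across disjoint destinations. For each source $s \in \{v, v'\} \cup \{v_{y,i}\}$ and each nerve $v_{x,j}$, let $p^s_{x,j}$ denote the probability that a single non-backtracking walk started at $s$ eventually halts at $v_{x,j}$. Enumerating non-backtracking paths on $G$ gives
\[
p^v_{x,j} = \tfrac{1}{d+1} d^{x-n}, \qquad p^{v'}_{x,j} = d^{x-n-1}, \qquad p^{y,i}_{x,j} = d^{-|y-x|-1}.
\]
Thinning each $\Poi(\mu)$ or $\Poi(\lambda)$ initial batch according to the eventual destination leaf of each particle, and combining across the independent sources, produces jointly independent Poisson arrival counts, one per source--destination pair, with means $\mu p^v_{x,j}$, $\lambda p^{v'}_{x,j}$, and $\lambda p^{y,i}_{x,j}$.

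Write $V = \{(y,i) : a_{y,i} = 1\}$ and $U = \{(x,j) : a_{x,j} = 0\}$, and let $E$ be the event that no particle from $v$, from $v'$, or from any nerve in $V$ reaches any $v_{x,j}$ with $(x,j) \in U$. The central structural claim is
\[
\{A = \a\} = A^\a \cap E.
\]
For the forward direction, if $A = \a$ then no $U$ nerve is activated, so its sleeping particles never move, and the dynamics on $\{v, v'\} \cup V$ coincide with the modified system underlying $A^\a$; meanwhile $E$ simply records that no $U$ arrivals occurred. Conversely, given $A^\a$ and $E$, the event $E$ prevents $\{v, v'\} \cup V$ from activating any $U$ nerve. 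An induction on the cascade of activations---using that only currently active sources contribute moving particles, and that activating particles halt at their destination leaf---shows no $U$ nerve is ever activated. Hence the real and modified dynamics on $\{v, v'\} \cup V$ agree, and $A^\a$ transfers back to give the activations of $V$ in the real system.

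Now $A^\a$ depends only on Poisson thinnings with destinations in $V$, while $E$ depends only on thinnings with destinations in $U$. Since disjoint destinations give independent thinnings, $A^\a$ and $E$ are conditionally independent given $N = n$. The proof concludes by factoring
\[
\P(E \mid N = n) = \prod_{(x,j) \in U} \exp\Bigl(-\mu p^v_{x,j} - \lambda p^{v'}_{x,j} - \lambda \sum_{(y,i) \in V} p^{y,i}_{x,j}\Bigr),
\]
combining these into a single exponential of a sum, and rewriting $U$ and $V$ via the indicators $(1-a_{x,j})$ and $a_{y,i}$. Substituting the hop probabilities recovers the stated formula.

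The most delicate step is the reverse direction of the structural equivalence. It hinges on the observation that particles only begin moving once their nerve is activated, and that activating particles halt at their destination leaves; so the absence of arrivals at $U$ from $\{v, v'\} \cup V$ truly precludes any $U$-internal activation cascade in the real system.
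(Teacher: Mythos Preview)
Your proof is correct and follows essentially the same approach as the paper: decompose $\{A=\a\}$ as the intersection of $A^\a$ with the event that no particle from $v$, $v'$, or the $V$-nerves lands at a $U$-nerve, then use Poisson thinning to obtain independence and compute the exponential factor. The paper's proof is a terse two-sentence version of exactly this; your write-up makes the structural equivalence $\{A=\a\}=A^\a\cap E$ and the hop probabilities $p^s_{x,j}$ explicit, and spells out the induction that under $E$ no $U$-cascade can begin, which the paper leaves implicit.
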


\begin{proof}
Conditional on $\A^a$ and $N=n$, in order to have $\A = \a$, none of the nerves $v_{x,j}$ with $a_{x,j} = 0$ are visited by active particles from $v, v'$ and $v_{y,i}$ with $a_{y,i}=1$. Poisson thinning (\thref{lem:thinning}) and additivity ensures that the number of particles moving to each $v_{x,j}$ with $a_{x,j} = 0$ is a Poisson random variable with parameter
$$m_{x,j} = \f \mu {d+1} d^{x-n} +  \lambda d^{x-n-1} + \lambda \sum_{y,i}  a_{y,i}  2^{- |y-x|-1}.$$
Again by Poisson thinning, these Poisson random variables are independent. Writing $\P(A^\a \mid N=n)\prod_{a_{x,j}=0} e^{-m_{x,j} }$ gives the claimed formula.
\end{proof}

\begin{proposition} \thlabel{prop:dom}
There exists $\lambda_0, \mu_0 \geq 0$ such that for all $\lambda \geq \lambda_0$ and $\mu \geq \mu_0$ it holds that 
$V' \succeq \Poi(\lambda)$ implies $V' \succeq \Poi(\lambda + 1)$.
\end{proposition}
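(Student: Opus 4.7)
The plan is to first apply \thref{lem:mono} and \thref{lem:fixed}: because $V'\overset{d}{=}\B\nu$ is a fixed point of $\B$ and $\B$ is monotone, the hypothesis $V'\succeq\Poi(\lambda)$ gives $V'\succeq\B\Poi(\lambda)$, so it suffices to prove $\B\Poi(\lambda)\succeq\Poi(\lambda+1)$. By \thref{lem:prec}, $\B\Poi(\lambda)\sim\Poi(\Theta)$ for the random parameter $\Theta$ described there, and \thref{lem:compare}(iii) converts the target dominance into the Laplace-transform inequality $\E[e^{-\Theta}]\leq e^{-(\lambda+1)}$.

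I will next condition on $N=n$ and expand using \thref{lem:A}:
$$\E[e^{-\Theta}\mid N=n]=\sum_{\a\in\{0,1\}^{(d-1)n}}\exp\bigl(-E_\a-\Theta(\a,n)\bigr)\,\P(A^\a\mid N=n),$$
where $E_\a$ is the exponent furnished by \thref{lem:A}. Bounding $\P(A^\a\mid N=n)\leq 1$ and collecting terms, the combined exponent $E_\a+\Theta(\a,n)$ is a bilinear form in $\a$: a linear part that penalizes each inactive nerve $v_{x,j}$ by $\tfrac{\mu}{d+1}d^{x-n}+\lambda d^{x-n-1}$ and credits each active nerve $v_{y,i}$ with $\lambda d^{y-n}$ in $\Theta$, together with a nonnegative quadratic cross term $\lambda\sum(1-a_{x,j})a_{y,i}d^{-|y-x|-1}$ coupling inactive and active nerves by their spine distance.

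From this expansion I expect to produce a tractable upper bound on $\E[e^{-\Theta}\mid N=n]$ by grouping the $d-1$ nerves at each spine position and combining them with the cross interactions. The governing heuristic is that ``mostly active'' configurations drive $\Theta$ up to roughly $\lambda(d-1)\sum_{k\geq 0}d^{-k}=\lambda d$, contributing at worst $e^{-\lambda d}\leq e^{-(\lambda+1)}$ as soon as $\lambda\geq 1/(d-1)$, while ``mostly inactive'' configurations are suppressed by the linear penalties in $E_\a$, which become large once $\mu,\lambda$ are large. With the conditional bound in hand, I will sum against $\P(N=n)=\alpha(1-\alpha)^{n-1}$ and choose $\mu_0$ and $\lambda_0$ so that the total sits below $e^{-(\lambda+1)}$ for all $\mu\geq\mu_0$ and $\lambda\geq\lambda_0$.

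The hard part will be controlling the quadratic cross term. Dropping it outright yields the factorized bound
$$\E[e^{-\Theta}\mid N=n]\leq e^{-\Theta_{\min}(n)}\prod_{k=0}^{n-1}F(k)^{d-1},$$
with $\Theta_{\min}(n)=\tfrac{\mu}{d+1}d^{-n+1}+\lambda d^{-n}$ and $F(k)=e^{-\tfrac{\mu}{d+1}d^{-k}-\lambda d^{-k-1}}+e^{-\lambda d^{-k}}$; since $F(k)\to 2$ as $k\to\infty$, the product grows like $2^{n(d-1)}$, which fails to beat the decay $(1-\alpha)^{n-1}$ of $\P(N=n)$ whenever $\alpha=\E[(d+1)^{-T+1}]<1-2^{-(d-1)}$. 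Because $\alpha$ can be arbitrarily small for general $\tau$, this factorization is inadequate on its own, and the cross term---which penalizes configurations with an inactive nerve adjacent to an active one---must be kept (at least partially) in order to control the large-$n$ tail. Engineering this balance appears to be the essential novelty of the argument.
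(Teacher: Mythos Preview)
Your plan is exactly the paper's: use \thref{lem:fixed} and \thref{lem:mono} to reduce to $\B\Poi(\lambda)\succeq\Poi(\lambda+1)$, invoke \thref{lem:prec} and \thref{lem:compare} to convert this to $\E[e^{-\Theta}]\leq e^{-(\lambda+1)}$, expand over configurations $\a$ via \thref{lem:A}, and observe that dropping the cross term fails because the resulting product behaves like $2^{(d-1)n}$. Your diagnosis that the cross term must be kept is precisely the point of the paper's \thref{lem:hb}. One small correction to your heuristic: when all nerves are active the parameter $\Theta$ is approximately $\lambda$, not $\lambda d$ (your geometric sum should start at $k=1$, giving $\lambda(d-1)\sum_{k\geq 1}d^{-k}=\lambda$); this is why the extra $+1$ genuinely has to come from the exponent supplied by \thref{lem:A}, not from $\Theta$ alone.

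What is missing from your proposal is the concrete mechanism, and the paper supplies it in two steps. First, the combined exponent $h_n(\a)=\Theta(\a,n)/\lambda+E_\a/\lambda$ (after dropping the $\mu$-terms in $E_\a$) telescopes: the linear ``inactive'' penalties $d^{x-n-1}$ combine with the ``active'' rewards $d^{y-n}$ and the constant $d^{-n}$ to yield
\[
h_n(\a)=1+\sum_{x,j}\sum_{y,i}(1-a_{x,j})a_{y,i}d^{-|x-y|-1}+\text{(nonnegative)},
\]
so $h_n(\a)\geq 1$ for every $\a$, which already secures the factor $e^{-\lambda}$. Second, the double sum is bounded below by $d^{-2}$ times the number $f_n(\a)$ of adjacent bit flips in $\a$ (read $\a$ as a single string of length $(d-1)n$); under the uniform measure that arises when you bound $\P(A^\a)\leq 1$ and view $\sum_\a=2^{(d-1)n}\E_{\text{unif}}$, one has $f_n(\a)\sim\Bin((d-1)n-1,1/2)$, whose Laplace transform is $\bigl((1+e^{-\lambda/d^2})/2\bigr)^{(d-1)n-1}$. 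This $2^{-(d-1)n+1}$ exactly cancels the $2^{(d-1)n}$ growth you identified, leaving $s_n\leq 2e^{-\lambda}(1+e^{-\lambda/d^2})^{(d-1)n-1}$, which for large $\lambda$ is a bounded sequence in $n$; dominated convergence (in $\mu$ via the surviving factor $e^{-\tfrac{\mu}{d+1}d^{-n+1}}$) then finishes the job. Both ideas---the telescoping to extract the constant $1$, and the bit-flip lower bound on the cross term---are absent from your proposal and are the substance of \thref{lem:hb}.
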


\begin{proof}
Fix $\lambda \geq 0$ and assume that $V' \succeq \Poi(\lambda)$. It follows from \thref{lem:fixed}, \thref{lem:mono}, and \thref{lem:prec}  that 
\begin{align}
V ' &\succeq \Poi\left(\textstyle \f{\mu}{d+1} d^{-N+1}  + \lambda d^{-N} + \lambda  \sum_{y,i} A_{y,i} d^{-y}  \right) \label{eq:Vp}.
\end{align}
Let $\Theta$ to be the random Poisson parameter in \eqref{eq:Vp}. \thref{lem:compare} gives that $\Poi(\Theta) \succeq \Poi(\lambda+1)$ if and only if $\E[ e^{-\Theta} ] \leq e^{-\lambda -1}$.  Proving this inequality for $\mu$ and $\lambda$ large enough is our goal. 

To this end, we start by writing
\begin{align}
\E[ e^{-\Theta}] &=  \textstyle \sum_{n=0}^\infty e^{- \f \mu {d+1} d^{-n+1}} \P(N=n) s_n  \label{eq:ETheta}
\end{align}
with
\begin{align}
s_n &:= \sum_{\a} \exp\left( -\lambda ({  d^{-n}+ \textstyle \sum_{y,i} a_{y,i} d^{y-n}  })\right) \P(A =\a \mid N=n ). \label{eq:isum}
    %&=\E \left[\exp\left(- \f \mu 3 2^{-N+1}   + \lambda2^{-N}  - \lambda \left( 2^{-N}+ \textstyle \sum_{A_y=1} 2^{y-N} \right) \right)\right]
\end{align}
Note that the sum in $s_n$ is over all $\a \in \{0,1\}^{(d-1)n}$. 
%Note that several times throughout the remaining argument we will implicitly use the fact that all summands are nonnegative. 
We first require a bound on $s_n$. Let
\begin{align}
h_n(\a) &:=  d^{-n}+  \sum_{y,i} a_{y,i} d^{y-n} + \sum_{x,j} (1-a_{x,j}) \left[  d^{x-n-1} + \sum_{y,i} a_{y,i}  d^{- |y-x|-1} \right], \label{eq:h}
\end{align}
and $g_n(\a):= \f \mu {d+1} d^{x-n}.$
Applying \thref{lem:A} lets us write
\begin{align}
s_n &= \sum_{\a } e^{- \lambda h_n(\a) - g_n(\a) } \P(A^\a \mid N=n).
\end{align}
The trivial bounds $g_n(\a) \geq 0$ and $\P(A^\a \mid N=n ) \leq 1$ give
\begin{align}
	s_n \leq \sum_{\a } e^{-\lambda h_n(\a)}= 2^{(d-1)n} \E [e^{- \lambda h_n(\a)} ]. \label{eq:Sh}
\end{align}
In the last equality, the expectation is taken with respect to the uniform measure on $\{0,1\}^{(d-1)n}$.

We have reduced bounding $s_n$ to estimating the moment generating function of $h_n$ evaluated at a uniformly random binary string $\a$. We  now prove a bound on this quantity as a lemma.
\begin{lemma} \thlabel{lem:hb} If $\a$ is a uniformly random binary string from $\{0,1\}^{(d-1)n}$ and $h_n$ is as defined at \eqref{eq:h}, then
\begin{align}
\E[e^{-\lambda h_n(\a)}] \leq e^{-\lambda}2^{-(d-1)n+1} (1+ e^{-\lambda /d^2} )^{(d-1)n-1}.\label{eq:hb}
\end{align}
\end{lemma}
\begin{proof}
Rearranging and simplifying $h_n(\a)$ gives
%\begin{align}
%2^{-n}+ \textstyle \sum_{x=1}^n \left[  a_x 2^{-z} + (1-a_x) \left[ 2^{z-n-1} + \sum_{y=1}^n a_y 2^{- |z-y|-1} \right]\right]
%\end{align}
\begin{align}
h_n(\a) 
% &= d^{-n}+ \textstyle \sum_{x,j} \left[  a_{x,j} d^{-x} + (1-a_{x,j}) \left[ d^{x-n-1} + \sum_{y,i} a_{y,i} d^{- |x-y|-1} \right]\right] \\
%     &=  d^{-n}+ \textstyle \sum_{x,j} \left[ d^{-x -n -1}
%      +  \sum_{y,i} a_{y,i} d^{- |x-y|-1} \right] \\
%      & \qquad \qquad \qquad \qquad  +\textstyle  \sum_{x,j} a_{x,j} \left[ d^{-x} - d^{x-n-1} - \sum_{y,i} a_{y,i} d^{- |x-y| -1}  \right] \\
%     &= \left[ d^{-n}+ \textstyle \sum_{x,j}  d^{-x -n -1} \right] 
%      +  \left[  \textstyle \sum_{x,j}\sum_{y,i} a_{y,i} d^{- |x-y|-1} \right] \\
%      & \qquad \qquad \qquad \qquad  +\textstyle  \sum_{x,j} a_{x,j} \left[ d^{-x} - d^{x-n-1} - \sum_{y,i} a_{y,i} d^{- |x-y| -1}  \right] \\
%     &=\textstyle \left[ 2^{-n}+ \sum_{x=1}^n 2^{x-n -1} \right] + \left[ \sum_{x=1}^n  \sum_{y=1}^n (a_y - a_ya_x)  2^{- |x-y|-1}  \right]\\
%     & \qquad \qquad \qquad \qquad\qquad \qquad \qquad \qquad +\textstyle \sum_{x=1}^n  a_x\left[   2^{-x}  -  2^{x-n-1} \right]\\
 &=1 +\sum_{x,j}^n  \sum_{y,i}^n (1-a_{x,j}) a_{y,i} d^{- |x-y|-1}  + \sum_{x,i}  a_{x,i}\left[   d^{-x}  -  d^{x-n-1}  \right]. \label{eq:hexpand}
\end{align}
Let $f_n(\a) := |\{(y,i) \colon a_{y,i} \neq a_{y,i+1}\}|$ with the convention that $a_{y,d} = a_{y+1,1}$. The quantity $f_n(\a)$ counts the number of ``bit flips'' reading $\a$ from left to right. 
Each flip  contributes $d^{-2}$ to the double sum in \eqref{eq:hexpand}. So, the double sum is at least $f_n(\a)/d^2$. As the final sum in \eqref{eq:hexpand} is nonnegative, we have
\begin{align}
	h_n(\a) \geq 1 + f_n(\a)/d^{2} . \label{eq:hc}
\end{align}
If $\a$ is sampled uniformly from $\{0,1\}^{(d-1)n}$, then $f_n(\a) \sim \Bin((d-1)n -1,2^{-1})$. Using the moment generating function of a Binomial random variable gives
\begin{align}
\E[e^{-\lambda f_n(\a)/d^2}] &= (2^{-1} + 2^{-1} e^{-\lambda /d^2})^{(d-1)n-1}\\
    &= 2^{-(d-1)n +1} ( 1 + e^{-\lambda /d^2})^{(d-1) n-1}.
\end{align}
This and \eqref{eq:hc} give \eqref{eq:hb}.
% \begin{align}
% \E[e^{-\lambda h_n(\a)}] \leq e^{-\lambda}2^{-(d-1)n+1} (1+ e^{-\lambda /d^2} )^{(d-1)n-1}.\label{eq:hb}
% \end{align}
\end{proof} 
Continuing towards the goal of bounding $\E[e^{-\Theta}]$, it follows from \eqref{eq:Sh} and \thref{lem:hb} that
\begin{align}
	s_n \leq  
	 e^{-\lambda}2(1 + e^{-\lambda /d^2})^{(d-1)n-1}.  \label{eq:Sb}
\end{align}
Applying \eqref{eq:Sb} to \eqref{eq:ETheta} gives
\begin{align}
	\E [e^{-\Theta}] &\leq  e^{-\lambda }  \textstyle \sum_{n=0}^\infty e^{- \f \mu {d+1} d^{-n+1}} 2 (1+e^{-\lambda/d^2})^{(d-1)n-1}\P(N=n). \label{eq:Theta}
\end{align}
The dominated convergence theorem implies that for $\mu$ and $\lambda$ sufficiently large 
\begin{align}
	\textstyle \sum_{n=0}^\infty  e^{- \f \mu {d+1} d^{-n+1}} 2 (1+e^{-\lambda/d^2})^{(d-1)n-1} \P(N=n) \leq e^{-1} .\label{eq:extra}
\end{align} 
Applying \eqref{eq:extra} to \eqref{eq:Theta} gives 
$\E[e^{-\Theta}] \leq e^{-\lambda - 1}$
and thus $V' \succeq \Poi(\lambda +1)$. 
\end{proof}

\section{Proof of \thref{thm:main}}\label{sec:proof}
\begin{proof}
 The lower bound on $\mu_c(\tau,d)$ follows from the fact that $\mu_c(\tau,d) \geq \mu_c(\delta_1,d)$ which is proven to be positive in \cite[Proposition 15]{hoffman2016transience}. Let $\mu_0$ and $\lambda_0$ be as in \thref{prop:dom}. It follows from \thref{lem:start} that for all $\mu \geq \mu_0$ we have $V' \succeq \Poi(\lambda_0)$. Applying \thref{prop:dom} iteratively shows that $V' \succeq \Poi(\lambda_0 + m)$ for all positve integers $m$, and thus $V'=\infty$ almost surely. Since \thref{lem:ss} gives $V \succeq V'$, we conclude that $\mu_c(\tau,d) \leq \mu_0 < \infty$.
\end{proof}

\appendix

\section{Data Availability Statement}
 Data sharing not applicable to this article as no datasets were generated or analysed.

\bibliographystyle{amsalpha}
\bibliography{threshold}

\newcommand{\etalchar}[1]{$^{#1}$}
\providecommand{\bysame}{\leavevmode\hbox to3em{\hrulefill}\thinspace}
\providecommand{\MR}{\relax\ifhmode\unskip\space\fi MR }
% \MRhref is called by the amsart/book/proc definition of \MR.
\providecommand{\MRhref}[2]{%
  \href{http://www.ams.org/mathscinet-getitem?mr=#1}{#2}
}
\providecommand{\href}[2]{#2}
\begin{thebibliography}{LGM{\etalchar{+}}18}

\bibitem[AMP02a]{alves2002phase}
Oswaldo Alves, Fabio Machado, and Serguei Popov, \emph{Phase transition for the
  frog model}, Electronic Journal of Probability \textbf{7} (2002), 1--21.

\bibitem[AMP02b]{alves2002shape}
Oswaldo~SM Alves, Fabio~P Machado, and S~Yu Popov, \emph{The shape theorem for
  the frog model}, The Annals of Applied Probability \textbf{12} (2002), no.~2,
  533--546.

\bibitem[BDD{\etalchar{+}}18]{beckman2018asymptotic}
Erin Beckman, Emily Dinan, Rick Durrett, Ran Huo, and Matthew Junge,
  \emph{Asymptotic behavior of the {B}rownian frog model}, Electronic Journal
  of Probability \textbf{23} (2018), 1--19.

\bibitem[BFJ{\etalchar{+}}19]{beckman2019frog}
Erin Beckman, Natalie Frank, Yufeng Jiang, Matthew Junge, and Si~Tang,
  \emph{The frog model on trees with drift}, Electronic Communications in
  Probability \textbf{24} (2019), 1--10.

\bibitem[BP22]{brouard2022invasion}
Vianney Brouard and Cornelia Pokalyuk, \emph{Invasion of cooperative parasites
  in moderately structured host populations}.

\bibitem[BR10]{berard2010large}
Jean B{\'e}rard and Alejandro~F Ram{\'\i}rez, \emph{Large deviations of the
  front in a one-dimensional model of ${X+ Y\to 2X}$}, The Annals of
  Probability \textbf{38} (2010), no.~3, 955--1018.

\bibitem[CQR09]{comets2009fluctuations}
Francis Comets, Jeremy Quastel, and Alejandro Ram{\'\i}rez, \emph{Fluctuations
  of the front in a one dimensional model of ${X+Y} \to 2{X}$}, Transactions of
  the American Mathematical Society \textbf{361} (2009), no.~11, 6165--6189.

\bibitem[DGH{\etalchar{+}}18]{dobler2018recurrence}
Christian D{\"o}bler, Nina Gantert, Thomas H{\"o}felsauer, Serguei Popov, and
  Felizitas Weidner, \emph{Recurrence and transience of frogs with drift on
  $\mathbb{Z}^d$}, Electronic Journal of Probability \textbf{23} (2018), 1--23.

\bibitem[DHL19]{deijfen2019competing}
Maria Deijfen, Timo Hirscher, and Fabio Lopes, \emph{Competing frogs on
  $\mathbb{Z}^d$}, Electronic Journal of Probability \textbf{24} (2019), 1--17.

\bibitem[Dur19]{durrett2019probability}
Rick Durrett, \emph{Probability: theory and examples}, vol.~49, Cambridge
  university press, 2019.

\bibitem[GS09]{gantert2009recurrence}
Nina Gantert and Philipp Schmidt, \emph{Recurrence for the frog model with
  drift on $\mathbb{Z}$}, Markov Process. Related Fields \textbf{15} (2009),
  no.~1, 51--58.

\bibitem[Her18]{hermon2018frogs}
Jonathan Hermon, \emph{Frogs on trees?}, Electronic Journal of Probability
  \textbf{23} (2018), 1--40.

\bibitem[HJJ16]{hoffman2016transience}
Christopher Hoffman, Tobias Johnson, and Matthew Junge, \emph{From transience
  to recurrence with {P}oisson tree frogs}, The Annals of Applied Probability
  \textbf{26} (2016), no.~3, 1620--1635.

\bibitem[HJJ17]{hoffman2017recurrence}
\bysame, \emph{Recurrence and transience for the frog model on trees}, The
  Annals of Probability \textbf{45} (2017), no.~5, 2826--2854.

\bibitem[HJJ19]{hoffman2019infection}
\bysame, \emph{Infection spread for the frog model on trees}, Electronic
  Journal of Probability \textbf{24} (2019), 1--29.

\bibitem[JJ16]{johnson2016critical}
Tobias Johnson and Matthew Junge, \emph{The critical density for the frog model
  is the degree of the tree}, Electronic Communications in Probability
  \textbf{21} (2016), 1--12.

\bibitem[JJ18]{johnson2018stochastic}
\bysame, \emph{Stochastic orders and the frog model}, no.~2, 1013--1030.

\bibitem[Jun20]{junge2020critical}
Matthew Junge, \emph{Critical percolation and ${A + B \to 2 A}$ dynamics},
  Journal of Statistical Physics \textbf{181} (2020), no.~2, 738--751.

\bibitem[KRS12]{kesten2012asymptotic}
Harry Kesten, Alejandro~F Ram{\i}rez, and Vladas Sidoravicius, \emph{Asymptotic
  shape and propagation of fronts for growth models in dynamic random
  environment}, Probability in Complex Physical Systems, Springer, 2012,
  pp.~195--223.

\bibitem[LGM{\etalchar{+}}18]{landsberger2018anti}
Mariann Landsberger, Sylvain Gandon, Sean Meaden, Clare Rollie, Anne
  Chevallereau, H{\'e}l{\`e}ne Chabas, Angus Buckling, Edze~R Westra, and
  Stineke van Houte, \emph{Anti-crispr phages cooperate to overcome crispr-cas
  immunity}, Cell \textbf{174} (2018), no.~4, 908--916.

\bibitem[LU19]{lebensztayn2019new}
Elcio Lebensztayn and Jaime Utria, \emph{A new upper bound for the critical
  probability of the frog model on homogeneous trees}, Journal of Statistical
  Physics \textbf{176} (2019), no.~1, 169--179.

\bibitem[MSH03]{misra2003stochastic}
Neeraj Misra, Harshinder Singh, and E~James Harner, \emph{Stochastic
  comparisons of {P}oisson and binomial random variables with their mixtures},
  Statistics \& probability letters \textbf{65} (2003), no.~4, 279--290.

\bibitem[Pop01]{popov2001frogs}
S~Yu Popov, \emph{Frogs in random environment}, Journal of Statistical Physics
  \textbf{102} (2001), no.~1, 191--201.

\bibitem[Pop03]{popov2003frogs}
Serguei~Yu Popov, \emph{Frogs and some other interacting random walks models},
  2003.

\bibitem[RS04]{ramirez2004asymptotic}
Alejandro~F Ram{\'\i}rez and Vladas Sidoravicius, \emph{Asymptotic behavior of
  a stochastic combustion growth process}, Journal of the European Mathematical
  Society \textbf{6} (2004), no.~3, 293--334.

\bibitem[RS21]{roy2021coexistence}
Rishideep Roy and Kumarjit Saha, \emph{Coexistence in discrete time multi-type
  competing frog models}, Electronic Communications in Probability \textbf{26}
  (2021), 1--9.

\bibitem[TW99]{telcs1999branching}
Andr{\'a}s Telcs and Nicholas~C Wormald, \emph{Branching and tree indexed
  random walks on fractals}, Journal of applied probability \textbf{36} (1999),
  no.~4, 999--1011.

\end{thebibliography}

\end{document}